\newtheorem{theorem}{Theorem}[section]
\newtheorem{lemma}[theorem]{Lemma}
\newtheorem{proposition}[theorem]{Proposition}
\theoremstyle{definition}
\newtheorem*{definition}{Definition}
\newtheorem*{Index Convention}{Index Convention  index}
\newtheorem*{notation}{Notation}
\def\keywords#1{\par\medskip
\noindent\textbf{Key words.} #1}
\begin{document}
\let\le=\leqslant
\let\ge=\geqslant
\let\leq=\leqslant
\let\geq=\geqslant
\newcommand{\e}{\varepsilon }
\newcommand{ \g}{\gamma}
\newcommand{\F}{{\Bbb F}}
\newcommand{\N}{{\Bbb N}}
\newcommand{\Z}{{\Bbb Z}}
\newcommand{\Q}{{\Bbb Q}}
\newcommand{\R}{\Rightarrow }
\newcommand{\W}{\Omega }
\newcommand{\w}{\omega }
\newcommand{\s}{\sigma }
\newcommand{\hs}{\hskip0.2ex }
\newcommand{\ep}{\makebox[1em]{}\nobreak\hfill $\square$\vskip2ex }
\newcommand{\Lr}{\Leftrightarrow }

\title{Lie algebras admitting a metacyclic Frobenius group of automorphisms}

\markright{}

\author{{N.\,Yu.~Makarenko} \\ \small Sobolev Institute of Mathematics,
Novosibirsk, 630\,090, Russia and\\ \small Universit\'{e} de Haute
Alsace, Mulhouse,68093, France\\
[-1ex] \small  natalia\_makarenko@yahoo.fr \\{E.\,I.~Khukhro}\\
\small Sobolev Institute of Mathematics, Novosibirsk, 630\,090, Russia\\[-1ex] \small khukhro@yahoo.co.uk }

\date{}
\maketitle
\begin{center} UDC 512.5
\end{center}

%\subjclass{Primary 17B40, 20D45, ?? ; Secondary 17B70, 20D15,
%20E36, 20F40, 22E25}
\begin{center}{\it to
Victor Danilovich Mazurov on the occasion of his
70th birthday}
\end{center}

\begin{abstract}
Suppose that a Lie algebra  $L$ admits a finite Frobenius group
of automorphisms $FH$ with  cyclic kernel  $F$ and complement
$H$ such that the characteristic of the ground field does not divide $|H|$.
It is proved  that if the subalgebra  $C_L(F)$ of fixed points of the kernel
has finite dimension $m$ and the subalgebra  $C_L(H)$ of fixed
points of the complement
is nilpotent of class $c$, then $L$ has a nilpotent
 subalgebra of finite
codimension bounded in terms of  $m$, $c$, $|H|$, and  $|F|$
whose nilpotency class is
bounded in terms of only $|H|$ and $c$. Examples  show that
the condition of the kernel $F$ being cyclic is essential.
\end{abstract}

\keywords{Frobenius groups, automorphism,  Lie algebras,
nilpotency class}

\section{Introduction}
  Recall that a finite {\it Frobenius group} $FH$ with kernel $F$ and complement $H$ is a semidirect product of a normal subgroup $F$ and a subgroup
$H$  in which every element of $H$ acts without non-trivial
fixed points on $F$, that is,  $C_F(h)=1$ for all $h\in
H\setminus\{1\}$. The structure of Frobenius groups is well known.  In particular,
 all abelian subgroup of $H$ are cyclic, and if $F$ is a cyclic
group, then $H$ is also cyclic.

Mazurov's problem 17.72 in ``Kourovka Notebook''
\cite{kour} gave rise to a number of recent papers, where
 groups $G$ are considered admitting a Frobenius group
of automorphisms $FH$ with kernel $F$ and complement $H$ such that $F$
acts without fixed points, $C_G(F)=1$. The goal of these papers
\cite{khu08, mak-shu10,khu10al, khu-ma-shu, khu-ma-shu-DAN,
shu-a4, shu-law, khu12ja,khu12al} are  restrictions on
the order, rank, nilpotent length, nilpotency class, and
exponent of the group $G$ in terms of the corresponding properties and parameters
of the centralizer $C_G(H)$ and order $|H|$. For estimating the
nilpotency class of the group $G$ in the case of nilpotent centralizer
of the complement $C_G(H)$,  Lie ring methods are used. The corresponding
theorems on Lie rings and algebras $L$ with a Frobenius group
of automorphisms $FH$ such that $C_L(F)=0$ are also important in their own right.

A natural and important generalization of this situation is
consideration of groups and Lie rings with a Frobenius group
of automorphisms $FH$ such that its kernel $F$ has bounded cardinality or
dimension of the set of fixed points. Then the goal is
obtaining similar restrictions on a subgroup or a subalgebra
of bounded index or codimension. In the present paper
we consider the case of Lie algebras, for which strong
bounds  are obtained for the nilpotency %!!
 class of a subalgebra of bounded
codimension.

Suppose that a Lie algebra $L$ of arbitrary, not necessarily
finite, dimension admits a finite
Frobenius group of automorphisms $FH$ with cyclic kernel $F$  and
complement $H$ such that the subalgebra  $C_L(H)$ of fixed points
of the complement is nilpotent of class $c$. If $C_L(F)=0$, that is,
the Frobenius kernel $F$ acts {\it regularly} (without non-trivial
fixed points) on $L$,  then by the
Makarenko--Khukhro--Shumyatsky theorem~\cite{khu-ma-shu,
 khu-ma-shu-DAN} the Lie algebra $L$ is nilpotent
of class  bounded by some function depending only on
$|H|$ and $c$. In this paper we generalize the
Makarenko--Khukhro--Shumyatsky theorem to the case where the Frobenius kernel $F$
acts ``almost regularly'' on $L$. We prove that if
the dimension of $C_L(F)$ is finite and the characteristic $L$ does not divide $|H|$,
then $L$ is almost nilpotent with estimates for the codimension of a nilpotent
subalgebra and for its nilpotency class.

\begin{theorem} \label{t-lie-algebra} Let $FH$ be a Frobenius
group with cyclic kernel $F$ of order $n$ and complement $H$
of order $q$. Suppose that $FH$ acts by automorphisms on
a Lie algebra $L$ of characteristic that does not divide $q$ in such a manner that
the fixed point subalgebra  $C_L(F)$  of the kernel has finite
dimension $m$ and the fixed point subalgebra $C_L(H)$
of the complement is nilpotent of class
$c$. Then $L$ has a nilpotent subalgebra
of finite codimension bounded by some function depending
only on %!!? ot?
$m$, $n$, $q$, and $c$, whose nilpotency class is
bounded by some function depending only on $q$ and $c$.
\end{theorem}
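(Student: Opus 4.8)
The plan is to reduce to the regular case handled by the Makarenko–Khukhro–Shumyatsky theorem. Since $F$ is cyclic of order $n$, the complement $H$ is also cyclic, of order $q$. The essential tool will be to pass to a suitable $FH$-invariant ideal on which $F$ acts regularly, so that the known theorem applies. First I would use the hypothesis that the characteristic does not divide $q$ to decompose $L$ with respect to the action of $H$, and more importantly to exploit the standard fact that in a Frobenius group with cyclic kernel $F = \langle \varphi \rangle$, the generator $\varphi$ acts on $L$ with eigenvalue decomposition tied to the primitive $n$-th roots of unity. The fixed-point subalgebra $C_L(F)$ being finite-dimensional is the "almost regular" hypothesis, and the goal is to produce an $FH$-invariant subalgebra of codimension bounded in terms of $m, n, q, c$ on which $F$ is fixed-point-free.

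**Key steps.**

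I would proceed as follows.

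\begin{enumerate}
\item Extend the ground field if necessary (this does not affect dimensions, codimensions, or nilpotency class, and one checks the Frobenius action and the characteristic hypothesis survive) so that it contains a primitive $n$-th root of unity $\omega$. Then $L$ decomposes as a direct sum of eigenspaces $L_j = \{x \in L : x\varphi = \omega^j x\}$ for the generator $\varphi$ of $F$, with $[L_i, L_j] \subseteq L_{i+j}$ (indices mod $n$), and $C_L(F) = L_0$ is finite-dimensional.
\item Show that the sum $L^* = \sum_{j \neq 0} L_j$ is "almost" an ideal: more precisely, produce an $FH$-invariant subalgebra $K$ of $L$ of codimension bounded in terms of $m, n, q$ such that $C_K(F) = 0$. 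The mechanism is that the finite-dimensionality of $L_0$ forces $[L_0, L]$ and the relevant "corrections" to live in a bounded-dimensional piece; one peels off a bounded-dimensional $FH$-invariant complement. This is where the machinery of \emph{generalized centralizers} / the "method of elimination" enters: one constructs $K$ so that $K \cap C_L(F) = 0$ while $\dim L/K$ stays bounded.
\item Apply the Makarenko–Khukhro–Shumyatsky theorem to $K$: since $FH$ acts on $K$ as a Frobenius group with $C_K(F) = 0$ and $C_K(H) \subseteq C_L(H)$ is nilpotent of class at most $c$, the subalgebra $K$ is nilpotent of class bounded by a function of $q$ and $c$ only.
\item Finally, enlarge $K$ slightly or pass to a further bounded-codimension subalgebra to get an actual \emph{nilpotent subalgebra} of $L$ (not merely an invariant one inside $K$) — but since $K$ itself is already nilpotent of bounded class and of bounded codimension in $L$, this step is essentially done, modulo checking that $K$ can be taken to be a subalgebra and the codimension bound has the claimed dependence.
\end{enumerate}

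**The main obstacle.**

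The hard part will be Step 2: manufacturing the $FH$-invariant subalgebra $K$ of bounded codimension with $C_K(F) = 0$. Naively taking $\sum_{j\neq 0} L_j$ does not give a subalgebra (brackets of elements with opposite eigenvalues land in $L_0$), so one must instead run an inductive "freezing" argument, using that $L_0$ is finite-dimensional to control how often brackets fall into $L_0$, and combine this with the theory of Lie algebras graded by $\Z/n\Z$ with a bounded-dimensional zero-component. I expect this to require the combinatorial/Lie-theoretic apparatus from the authors' earlier work (generalized centralizers, and the fact that a $(\Z/n\Z)$-graded Lie algebra with $d$-dimensional zero-component is "close to" one with trivial zero-component), together with a Maschke-type averaging over $H$ — which is exactly where the characteristic-not-dividing-$q$ hypothesis is indispensable. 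Once $K$ is in hand, Step 3 is a black-box citation and the class bound depending only on $q$ and $c$ comes for free from the cited theorem.
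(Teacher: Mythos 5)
Your Steps 1, 3, and the broad outline (reduce somehow to the regular case and invoke the Makarenko--Khukhro--Shumyatsky theorem) are in the right spirit, but Step~2 as stated contains the essential gap, and it is a gap the paper itself does not fill in the way you propose.

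You want to manufacture an $FH$-invariant subalgebra $K$ of $(m,n,q,c)$-bounded codimension with $C_K(F)=0$, and then apply the regular MKhSh theorem to $K$ as a black box. But there is no reason such a $K$ should exist: any subalgebra $K$ of bounded codimension must contain large subspaces of $L_i$ and $L_{-i}$ for each $i$, and $[L_i,L_{-i}]\subseteq L_0$, so $K\cap L_0 = C_K(\varphi)$ is forced to be nonzero in general. The finite-dimensionality of $L_0$ bounds $\dim C_K(F)$ but does not let you kill it, and no amount of "peeling off an $FH$-invariant complement" changes this, since brackets of elements of $K$ with nonzero opposite indices still land in $L_0\cap K$. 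In fact, the subalgebra $Z=\langle L_1(T),\dots,L_{n-1}(T)\rangle$ that the paper ultimately constructs does \emph{not} have $C_Z(F)=0$; the paper never reduces to a genuinely regular situation on a subalgebra of $L$.

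What the paper actually does is quite different in the crucial middle step. It applies Theorem~\ref{kh-ma-shu10-1} not to any subalgebra of $L$ but to a quotient of a \emph{free} Lie algebra $K$ with free generators indexed by $h$-orbits, modulo the ideal generated by the zero-component and the $F$-closure of $\gamma_{c+1}(C_K(H))$; this yields the purely combinatorial Theorem~\ref{combinatorial} and Proposition~\ref{kh-ma-shu-transformation}, which rewrite any long homogeneous commutator as a combination of commutators containing many subcommutators with zero index sum. Nilpotency of $Z$ is then proved \emph{directly}: one shows that every commutator of weight $U$ in elements of the top-level generalized centralizers $L_j(T)$ vanishes, by repeatedly ``freezing'' zero-sum subcommutators into representatives of lower levels and collecting them, at which point the centralizer property \eqref{centalizer-property} kills the result. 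So the regular theorem is used only combinatorially on a free object, not applied to a regular subalgebra of $L$. You should also note that the paper separately handles the case where the characteristic divides $n$ (replacing $F$ by its $p'$-part and using Lemma~\ref{p-p}); your proposal implicitly assumes the eigenspace decomposition exists, which requires char $\nmid n$.
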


There are examples showing that the result is not true  if
the kernel $F$ is not cyclic (see examples in \cite{khu-ma-shu}).
The functions of $m$, $n$, $q$, $c$ and of  $q$ and $c$ in
Theorem~\ref{t-lie-algebra} can be estimated from above explicitly,
although we do not write out these estimates here.

The proof of Theorem~\ref{t-lie-algebra} uses the method
of generalized, or graded, centralizers,
which was originally created in \cite{kh2} for almost regular
automorphisms of prime order, see  also~\cite{khmk1,khmk3,
khmk4,khmk5} and Ch.~4 in \cite{kh4}. This method consists in
  the following. In the proof of Theorem~\ref{t-lie-algebra} we can assume that
    the ground field contains a primitive $n$th root  of unity~$\omega$. Let $F=\langle \varphi\rangle$.
    Then $L$ decomposes into the direct sum
of eigenspaces
 $L_j=\{ a\in L\mid a^{\varphi}=\omega ^ja\}$, which are also
  components of a $(\Z /n\Z)$-grading: $[L_s,\, L_t]\subseteq
L_{s+t},$ where $s+t$ is calculate modulo $n$.  In each of the $L_i$,
$i\ne 0$,   certain subspaces
 $L_i(k)$ of bounded codimension  --- ``graded centralizers'' ---
 of increasing levels $k$ are successively constructed, and simultaneously
certain elements (representatives) $x_i(k)$ are fixed, all this up to a certain
$(c,q)$-bounded level. Elements of $L_j(k)$ have
a centralizer property with respect to the  fixed elements
of lower levels: if a commutator (of bounded weight) that involves exactly one element
$y_j(k)\in L_j(k)$ of level $k$ and some fixed elements $x_i(s)\in L_i(s)$ of
lower levels $s<k$ belongs to $L_0$, then this commutator is equal to~$0$.
The sought-for subalgebra is the subalgebra $Z$ generated by all
the $L_i(T)$, $i\ne 0$, of highest level $T$. The proof of the fact that the
subalgebra $Z$ is nilpotent of bounded class is based on
Proposition~\ref{kh-ma-shu-transformation}, which is
a combinatorial consequence of the Makarenko--Khukhro--Shumyatsky
theorem~\cite{khu-ma-shu, khu-ma-shu-DAN} and reduces the question of nilpotency to consideration of
commutators of a special form. Various collecting processes applied here and other arguments are based precisely on the aforementioned centralizer property.

Results on Lie algebras  (rings)  with Frobenius groups
of automorphisms are applicable to various classes of groups. In particular, it follows from the
Makarenko--Khukhro--Shumyatsky theorem~\cite{khu-ma-shu} that if a finite group (or a locally
nilpotent group, or a Lie group) $G$ admits a Frobenius
group of automorphisms $FH$ with cyclic kernel $F$ of order $n$ and
complement $H$ of order $q$ such that $C_G(F)=1$ and $C_G(H)$
is nilpotent of class $c$, then $G$ is nilpotent of $(c,q)$-bounded
class. Theorem~\ref{t-lie-algebra} is also  applicable to locally
nilpotent torsion-free groups with a metacyclic Frobenius
group of automorphisms (Theorem~\ref{t-groups1}).

We briefly describe the plan of the paper. After recalling definitions and introducing notation in \S\,2
we firstly prove in \S\,3
combinatorial consequences of the Makarenko--Khukhro--Shumyatsky theorem
(Theorem~\ref{combinatorial} and
Proposition~\ref{kh-ma-shu-transformation}), which are key in the proof of the theorem.  Then
in \S\,4 and \S\,5
 generalized centralizers and fixed elements are constructed and their basic properties are
proved. This is based on the original construction in \cite{kh2}, which, however, had to be considerably modified in accordance with the hypotheses of the problem. In \S\,6 the sought-for subalgebra is constructed  and the
nilpotency of this subalgebra is proved.
In~\S\,7  Theorem~\ref{t-groups1} on locally
nilpotent torsion-free groups is proved.

\section{Preliminaries}

We recall some definitions and notions.  For brevity we say that a certain quantity is {\it $(c,q)$-bounded}
(or, say,  {\it $(m,n,q,c)$-bounded}) if it is bounded above
by some function  depending only  $c$ and $q$ (respectively,
only on  $m$, $n$, $q$, and $c$).

Products  in a Lie algebra are called ``commutators''.
We denote by $\langle S\rangle $ the Lie subalgebra
generated by a subset~$S$.

Terms of the lower central series of a Lie algebra $L$ %!!
are defined by induction: $\gamma_1(L)=L$; $\gamma_{i+1}(L)=[\gamma_i(L),L].$
By definition a Lie algebra $L$ is nilpotent of class~$h$ if
$\gamma_{h+1}(L)=0$.

A simple commutator $[a_1,a_2,\dots ,a_s]$ of weight (length)
 $s$ is by definition the commutator $[\dots [[a_1,a_2],a_3],\dots ,a_s]$.
 By the Jacobi identity $[a,[b,c]]=[a,b,c]-[a,c,b]$ any (complex, repeated)
commutator in some elements in any Lie algebra
can be
expressed as a linear combination of simple commutators of
the same weight in the same elements. Using also the anticommutativity
$[a,b]=-[b,a]$, one can make sure that  in this linear combination
all simple commutators begin with some pre-assigned
element occurring in the original commutator. In particular, if
$ L=\langle  S\rangle $, then the space $L$ is generated by simple
commutators in elements of~$S$.

Let  $A$ be an additively written abelian group. A Lie algebra $L$
is \textit{$A$-graded} if
$$L=\bigoplus_{a\in A}L_a\qquad \text{ and }\qquad[L_a,L_b]\subseteq L_{a+b},\quad a,b\in A,$$
where  $L_a$ are subspaces of $L$. Elements of the subspaces $L_a$
are called \textit{homogeneous}, and commutators in homogeneous
elements  \textit{homogeneous commutators}. A subspace
 $H$ of the space $L$ is said to be \textit{homogeneous}
if $H=\bigoplus_a (H\cap L_a)$; then we set $H_a=H\cap L_a$.
Obviously,  any subalgebra or an ideal generated by homogeneous
subspaces is
 homogeneous. A homogeneous subalgebra  and the
quotient algebra by a homogeneous ideal can be regarded as
$A$-graded algebras with induced grading.

Suppose that a Frobenius group $FH$  with cyclic kernel
$F=\langle \varphi\rangle$ of order $n$ and complement $H$ of order
$q$ acts on a Lie algebra $L$ in such a way that the subalgebra
of fixed points $C_L(F)$ has finite dimension   $\dim
C_L(F)=m$, and the subalgebra of fixed points $C_L(H )$ is nilpotent
of class $c$.

Let $\omega$ be a primitive $n$th root of unity. We extend the
ground field by $\omega$ and denote by $\widetilde L$ the algebra
over the extended field.  The group $FH$ naturally acts on
$\widetilde L$, and the subalgebra of fixed points $C_{\widetilde
L}(H)$ is nilpotent of class~$c$, while the subalgebra of fixed
points $C_{\widetilde L}(F)$ has dimension $m$.

\begin{definition} We define $\varphi\hs$-{\it homogeneous components\/} $L_k$ for
$k=0,\,1,\,\ldots ,n-1$ as the eigensubspaces %!!
$$L_k=\left\{ a\in L\mid a^{\varphi}=\w ^{k}a\right\} .$$
\end{definition}
It is known that if the characteristic of the field does not divide $n$, then
$$
L= L_0 \oplus L_1\oplus \dots \oplus L_{n-1}
$$
(see, for example, Ch.~10 in the book~\cite{hpbl}). This decomposition
is a $({\Bbb Z}/n{\Bbb Z})$-grading due to the obvious
inclusions
$$[L_s,\, L_t]\subseteq
L_{s+t\,({\rm mod}\,n)},$$ where $s+t$ is calculated  modulo $n$.

{\bf Index Convention.} {\it Henceforth a small letter
with  index $i$ will denote an element of the $\varphi\hs$-homogeneous
component $L_i$, here the index will only indicate the
$\varphi\hs$-homogeneous component to which this
element belongs: $x_i\in L_i$. To lighten the notation we will not
use numbering indices for elements in  $L_j$, so that
different elements can be denoted by the same symbol when
it only matters to which $\varphi\hs$-homogeneous
component these elements  belong. For example, $x_1$ and $x_1$ can be
different elements of  $L_1$, so that $[x_1,\, x_1]$ can be a  nonzero element of
$L_2$. These indices will be usually considered modulo~$n$; for example, $a_{-i}\in
L_{-i}=L_{n-i}$.}

Note that in the framework of the Index Convention
a $\varphi\hs$-homogeneous commutator
 belongs to the $\varphi\hs$-homogeneous component $L_s$, where
 $s$ is the  sum modulo $n$ of the indices of all the elements occurring in this
commutator.

\section{Combinatorial theorem}

In this section we prove a certain combinatorial fact that
follows from the following
Makarenko--Khukhro--Shumyatsky theorem~\cite{khu-ma-shu}.

\begin{theorem} [Makarenko--Khukhro--Shumyatsky \cite{khu-ma-shu}]\label{kh-ma-shu10-1}
Let $FH$ be a Frobenius group with cyclic kernel  $F$
of order $n$ and complement $H$ of order $q$. Suppose that  $FH$
acts by automorphisms  on a Lie algebra %!!
$L$ in such a way that
$C_L(F)=0$ and the subalgebra of fixed points $C_L(H)$ is nilpotent
of class $c$. Then for some $(q,c)$-bounded number
$f=f(q,c)$ the algebra $L$ is nilpotent of class at most $f$. %!!!
\end{theorem}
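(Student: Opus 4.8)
The plan is to pass to a $(\Z/n\Z)$-graded picture with trivial zero-component, deduce solvability from Kreknin's theorem, and then extract nilpotency of $(q,c)$-bounded class by a combinatorial analysis of homogeneous commutators that uses the symmetry of the grading coming from the complement~$H$; the decisive point, and the main obstacle, will be to make the resulting class bound independent of~$n$. First one may assume that the ground field contains a primitive $n$th root of unity $\omega$ (pass to a scalar extension; neither $C_L(F)=0$ nor the nilpotency of $C_L(H)$ is affected) and, taking the characteristic prime to $n$ (the case where it divides $n$ being dealt with separately), that $L=L_1\oplus\dots\oplus L_{n-1}$ with $L_k=\{a\in L\mid a^{\varphi}=\omega^{k}a\}$, $L_0=C_L(F)=0$, and $[L_s,L_t]\subseteq L_{s+t}$, indices modulo $n$. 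By Kreknin's theorem a $(\Z/n\Z)$-graded Lie algebra with trivial zero-component is solvable, of derived length bounded in terms of $n$; so from now on $L$ is solvable of derived length at most some explicit $d(n)$.

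Next, bring in the complement. As $H=\langle h\rangle$ is cyclic and normalizes $F=\langle\varphi\rangle$, conjugation by $h$ sends $\varphi$ to a power $\varphi^{s}$, so $h$ permutes the components by $i\mapsto si$ on indices, and the Frobenius condition makes this permutation of the nonzero residues fixed-point-free with all orbits of length $q$. Hence for homogeneous $a\in L_i$ the orbit sum $a+a^{h}+\dots+a^{h^{q-1}}$ lies in $C_L(H)$ and has $L_i$-component exactly $a$; a Vandermonde inversion over the automorphisms $\varphi^{0},\varphi^{1},\dots,\varphi^{n-1}$ (again using that the characteristic is prime to $n$) then expresses $a$ as a linear combination of the $\varphi^{j}$-images of such orbit sums, so $L=\sum_{j=0}^{n-1}C_L(H)^{\varphi^{j}}$. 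Thus $L$ is a sum of $n$ subalgebras, each an automorphic image of $C_L(H)$ and so nilpotent of class $c$, the decomposition being permuted by $F$ and respecting the grading. This is the shape in which the hypothesis on $C_L(H)$ enters: in a homogeneous commutator any $c+1$ entries drawn from a single subalgebra $C_L(H)^{\varphi^{j}}$ can be cleared.

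Now pass from solvability to nilpotency by induction on the derived length. The ideal $[L,L]$ is homogeneous, $FH$-invariant, of smaller derived length, and satisfies $C_{[L,L]}(F)=0$ and $C_{[L,L]}(H)$ nilpotent of class at most $c$, so it is covered by the inductive hypothesis; the remaining task is to bound the degree of the action of $L$ on a graded, $FH$-invariant, abelian ideal $B=\bigoplus_{i\ne 0}B_i$. For this one analyses a homogeneous commutator $[b,a_{i_1},\dots,a_{i_k}]$ with $b\in B$, $a_{i_\ell}\in L_{i_\ell}$, which lies in $B_{i_0+i_1+\dots+i_k}$: re-expanding the entries through the orbit-sum description one may take each entry in some $C_L(H)^{\varphi^{j}}$, clear repeated entries from a single such subalgebra via its class-$c$ identity, and use the arithmetic of the indices modulo $n$ together with the free $q$-periodic $H$-action to bound the length a surviving commutator can have. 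This shows $L$ nilpotent.

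The main obstacle is that, run this way, the class bound grows both with Kreknin's $d(n)$ and with the number of steps of the induction, hence with $n$, whereas the theorem demands a $(q,c)$-bound; removing the dependence on $n$ is the real content. I would handle it by proving a combinatorial theorem about $(\Z/n\Z)$-graded Lie algebras with trivial zero-component that carry a free $q$-periodic symmetry of the grading and the ``nilpotent orbit-sum'' property above: fix a $(q,c)$-bounded $N=N(q,c)$ and show, by collecting processes that only ever refer to the local $q$-periodic pattern of the indices involved (never to $n$) and to the clearing of repeated same-subalgebra entries, that every homogeneous commutator of weight exceeding $N$ vanishes, then convert this into $\gamma_{N'+1}(L)=0$ for a $(q,c)$-bounded $N'$. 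Making those collecting processes terminate with an $n$-free bound --- and, where an inductive or limiting argument needs finiteness, first reducing to finitely generated subalgebras, which are manageable thanks to the bounded derived length, and passing to the limit --- is where essentially all the work goes; the steps above (scalar extension and grading, Kreknin's solvability, the Vandermonde generation) are routine by comparison.
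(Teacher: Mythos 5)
This statement is not proved in the paper at all: it is the Makarenko--Khukhro--Shumyatsky theorem, imported verbatim from \cite{khu-ma-shu} and used as a black box (the present paper only proves combinatorial consequences of it, such as Theorem~\ref{combinatorial}). So there is no internal proof to compare against, and your proposal has to stand on its own. It does not: it contains two genuine gaps. First, the claim that in a homogeneous commutator ``any $c+1$ entries drawn from a single subalgebra $C_L(H)^{\varphi^{j}}$ can be cleared'' is unjustified. Nilpotency of class $c$ of a subalgebra $M$ kills only commutators all of whose entries lie in $M$; a long commutator with $c+1$ entries from $M$ interspersed with entries from other components is not zero for that reason, and the collection process needed to bring those entries together produces new entries $[m,x]$ that need not lie in $M$. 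Making this work is one of the delicate points of the actual argument in \cite{khu-ma-shu}, not a routine consequence of $\gamma_{c+1}(C_L(H))=0$.

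Second, and decisively, the required bound $f(q,c)$ must be independent of $n$, and your scaffolding (Kreknin's theorem giving derived length $d(n)$, followed by induction on derived length) produces bounds that grow with $n$ at every stage. You acknowledge this and propose to repair it by ``a combinatorial theorem'' whose collecting processes ``only ever refer to the local $q$-periodic pattern of the indices'' and terminate with an $n$-free bound --- but you neither state this theorem precisely nor prove it, and you concede that ``essentially all the work goes'' there. That combinatorial theorem is, in substance, the theorem to be proved; deferring it means the proof is not given. The preliminary reductions you do carry out (scalar extension, the $(\Z/n\Z)$-grading with $L_0=0$, the action of $h$ on indices by $i\mapsto ri$, orbit sums landing in $C_L(H)$ and the resulting decomposition $L=\sum_j C_L(H)^{\varphi^j}$) are correct and do match the setup used in \cite{khu-ma-shu} and in \S3 of the present paper, but they are, as you say yourself, the routine part.
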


We consider a Frobenius group $FH$  with cyclic kernel
$F=\langle \varphi\rangle$ of order $n$ and complement $H$ of order
$q$ that acts on a Lie algebra $L$ in such a way that
the subalgebra of fixed points $C_L(F)$ has finite dimension
$m$ and the subalgebra of fixed points $C_L(H )$ is nilpotent of class
$c$. Since the kernel $F$ of the Frobenius group $FH$ is a cyclic
subgroup, the subgroup $H$ is also cyclic. Let $H= \langle h
\rangle$ and $\varphi^{h^{-1}} = \varphi^{r}$ for some $1\leq
r \leq n-1$. Then  $r$ is a primitive $q$th root of unity in the ring ${\Bbb Z}/n {\Bbb Z}$ and. moreover,
the image of the element
$r$ in ${\Bbb Z}/d {\Bbb Z}$ is  a primitive  $q$th root of unity for every divisor $d$ of the
number $n$, since  $h$
acts without non-trivial  fixed points on every subgroup
of the group $F$.

The group $H$ permutes the homogeneous components $L_i$ as follows:
${L_i}^h = L_{ri}$ for all $i\in \Bbb Z/n\Bbb Z$.
Indeed, if $x_i\in L_i$, then
$(x_i^{h})^{\varphi} =
x_i^{h\varphi h^{-1}h} = (x_i^{\varphi^{r}})^h =\omega^{ir}x_i^h$.

In what follows, for a given $u_k\in L_k$ we denote the element
$u_k^{h^i}$ by $u_{r^ik}$ in the framework the Index Convention,
since  ${L_k}^{h^i} = L_{r^ik}$. Since the sum over %!!
any
$H$-orbit belongs to the  centralizer $C_L(H)$, we have %!!
$u_k+u_{rk}+\cdots+u_{r^{q-1}k}\in C_L(H)$.

\begin{theorem}\label{combinatorial} Let  $FH$ be a Frobenius group with cyclic kernel
$F=\langle \varphi\rangle$ of order $n$ and complement  $H=\langle h
\rangle$ of order $q$ and let $\varphi^{h^{-1}} = \varphi^{r}$ for
some positive integer $1\leq r \leq n-1$. Let  $f(q,c)$ be the
function in Theorem~\ref{kh-ma-shu10-1}, let $\Bbb F$ be a field  containing
a primitive $n$th root of unity the  characteristic of which
does not divide $q$ and $n$, and let $L$ be a Lie algebra over $\Bbb F$.
Suppose that $FH$ acts by automorphisms on $L$ in such a way that the subalgebra of
fixed points $C_L(H)$ is nilpotent
of class $c$ and $L=\bigoplus_{i=0}^{n-1} L_i$, where $L_i=\{x\in
L\mid x^{\varphi}=\omega^ix\}$ are  $\varphi$-homogeneous components
(eigensubspaces for eigenvalues $\omega^i$).
Then  any
 $\varphi$-homogeneous
 commutator  $[x_{i_1},
x_{i_2},\ldots, x_{i_{T}}]$ with non-zero indices of weight
$T=f(q,c)+1$ can be represented as a linear combination of
$\varphi$-homogeneous commutators of the same weight $T$ each of
which, for every $s=1,\ldots, T$, includes exactly the same number
of elements of the orbit
$$O(x_{i_s})=\{x_{i_s},\,\,
\,x_{i_s}^h=x_{ri_s},\,\,\,\ldots,\,\,\,
x_{i_s}^{h^{q-1}}=x_{r^{q-1}i_s}\,\}$$
as the original
commutator,
 and contains a subcommutator with
zero sum of indices modulo $n$.
\end{theorem}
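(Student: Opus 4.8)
The plan is to set up an auxiliary Lie algebra on which the Frobenius kernel acts fixed-point-freely, apply Theorem~\ref{kh-ma-shu10-1} there, and then pull the resulting identity back to $L$ after specializing the free variables. First I would fix notation: let $T=f(q,c)+1$ and let $\bar L$ be the free Lie algebra (over $\F$) on a set of free generators indexed so that $FH$ acts on $\bar L$ with the right combinatorics --- concretely, take free generators $y_j$ for each $j\in\Z/n\Z\setminus\{0\}$ together with their $H$-orbit mates $y_{r^k j}$, declare $y_j^{\varphi}=\omega^j y_j$ and let $h$ permute the generators by $y_j^h=y_{rj}$; extend by the universal property. Then $\bar L$ is $\Z/n\Z$-graded by the $\varphi$-eigenvalues, $FH$ acts by automorphisms, and the sum over any $H$-orbit of generators lies in $C_{\bar L}(H)$. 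The point is that any $\varphi$-homogeneous commutator in the $y$'s with nonzero indices is an honest element of $\bar L$, and specializing $y_j\mapsto x_j$ realizes the commutator we want to analyze in $L$.

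Next I would reduce to a fixed-point-free situation. Inside $\bar L$ let $I$ be the ideal generated by all $\varphi$-homogeneous commutators of weight $\ge 2$ whose total index is $\equiv 0\pmod n$ --- equivalently the ideal generated by $\bar L_0\cap\gamma_2(\bar L)$; this is a homogeneous $FH$-invariant ideal. Passing to $M:=\bar L/I$, one checks that $C_M(F)=M_0=0$: indeed any element of $M_0$ is a combination of commutators with zero total index, and all such commutators of weight $\ge 2$ have been killed, while there are no weight-one generators of index $0$. Also $C_M(H)$ is a homomorphic image of $C_{\bar L}(H)$; the latter need not be nilpotent of class $c$ for the full free algebra, so the genuinely careful step is to instead work modulo the ideal $J$ generated by $I$ together with $\gamma_{c+1}\bigl(C_{\bar L}(H)\bigr)$ --- i.e.\ to force the hypotheses of Theorem~\ref{kh-ma-shu10-1} by fiat. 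Set $M=\bar L/J$. Then $M$ carries the $FH$-action, $C_M(F)=0$, and $C_M(H)$ is nilpotent of class $\le c$, so Theorem~\ref{kh-ma-shu10-1} gives $\gamma_{f(q,c)+1}(M)=0$, i.e.\ $\gamma_T(\bar L)\subseteq J$.

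From $\gamma_T(\bar L)\subseteq J$ I would extract the statement. Our commutator $w=[y_{i_1},\dots,y_{i_T}]$ lies in $\gamma_T(\bar L)$, hence $w\in J$, so $w$ is a (finite) $\F$-linear combination of commutators, each of which either lies in $I$ --- hence, by the Jacobi/anticommutativity rewriting recalled in \S\,2, is a linear combination of simple $\varphi$-homogeneous commutators each containing a proper subcommutator of zero total index --- or has a subcommutator lying in $\gamma_{c+1}(C_{\bar L}(H))$, i.e.\ contains a subexpression that is a commutator of weight $\ge c+1$ in $H$-orbit-sums; each such orbit-sum expands, via multilinearity, into $\varphi$-homogeneous commutators, and a commutator of $\ge c+1\ge 2$ orbit-sums visibly contains a subcommutator (two orbit-sums bracketed together, say the pieces $y_{r^a i}$ and $y_{r^b i'}$ with $a+b$ chosen appropriately) of zero total index --- wait, more carefully: the orbit-sum $\sum_k y_{r^k i}$ is itself not homogeneous, but bracketing two of them and expanding, one can always match a term of index $j$ from the first sum with a term of index $-j$ from the second, since the orbit index sets are closed under multiplication by the unit $r$ and $j\mapsto -j$ stays in the union of the two orbits when... this matching is exactly where the multiplicative structure of $r$ in $\Z/n\Z$ is used, and that is the main obstacle. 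So the crux is a purely arithmetic lemma: for the bracket of two $H$-orbit-sums (or a nested bracket of $c+1$ of them) one can exhibit, after expansion, at least one $\varphi$-homogeneous summand containing a zero-index subcommutator; given that, collecting all terms and re-expressing everything in simple commutators via \S\,2 yields the asserted linear combination, with the orbit-multiplicity bookkeeping preserved because every rewriting move (Jacobi, anticommutativity, orbit-sum expansion) only permutes or re-brackets the generator occurrences and never changes which orbit a given occurrence came from. Finally, specialize $y_{i_s}\mapsto x_{i_s}$ (and accordingly $y_{r^k i_s}\mapsto x_{r^k i_s}$) to transport the identity from $\bar L$ to $L$. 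I expect the orbit-multiplicity-preservation claim and the zero-index-subcommutator arithmetic to be the two points requiring genuine care; everything else is the standard free-algebra-plus-specialization template.
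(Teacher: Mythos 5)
Your high-level setup is right and essentially matches the paper: pass to a free Lie algebra carrying an $FH$-action, kill the obstruction to $C(F)=0$ and to $C(H)$ being nilpotent of class $c$, invoke Theorem~\ref{kh-ma-shu10-1} to force $\gamma_T$ into the sum of the two ideals, and then specialize back to $L$. But there is a genuine gap exactly where you flag ``the main obstacle,'' and it is not an arithmetic lemma that is missing --- it is a different observation. The commutators coming from the ideal generated by $\gamma_{c+1}(C_{\bar L}(H))$ do \emph{not} need to be rewritten as combinations with zero-index subcommutators, and in general they cannot be (there is no reason why two orbit-sums of length $q$ bracketed together should produce a zero-index summand when expanded). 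The point you are missing is that this ideal is annihilated by the specialization $\delta\colon y_{i_s}\mapsto x_{i_s}$, $y_{r^k i_s}\mapsto x_{i_s}^{h^k}$. Since $\delta$ is $H$-equivariant and $C(H)$ is spanned by $H$-orbit averages (the characteristic does not divide $q$), $\delta$ carries $C_{\bar L}(H)$ into $C_L(H)$, and hence carries $\gamma_{c+1}(C_{\bar L}(H))$ into $\gamma_{c+1}(C_L(H))=0$ by hypothesis. So you should work modulo the ideal $I$ generated (as an $F$-invariant ideal --- you do need the $\varphi$-closure, or you will not have a well-defined $F$-action on the quotient) by $\gamma_{c+1}(C_{\bar L}(H))$, get $[y_{i_1},\dots,y_{i_T}]\equiv$ (linear combination of commutators with zero-index subcommutators) $\pmod I$, and only then apply $\delta$; the $I$-part vanishes for free. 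Chasing the orbit-sum expansion at the level of the free algebra is both unnecessary and, for the ``every term'' form of the conclusion, not going to work.

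Two secondary points. First, your free algebra has one generator per nonzero residue class, but the conclusion is about multiplicities of elements in \emph{each} orbit $O(x_{i_s})$ separately, even when several $i_s$ coincide; you need $qT$ fresh generators (one for each $y_{r^k i_s}$, $1\le s\le T$, $0\le k\le q-1$), so that the free algebra carries a multigrading by the $T$ orbits. Second, ``the orbit-multiplicity bookkeeping is preserved because the rewriting moves only permute occurrences'' is not a proof: membership in $J+I$ gives some linear combination, not automatically one with prescribed orbit multidegrees. The paper pins this down by applying, for each $s$, the Lie homomorphism $\theta_s$ that kills $O(y_{i_s})$ and fixes the other generators; iterating discards all summands missing some orbit, and since everything is of total weight $T$ the surviving summands contain exactly one generator from each orbit. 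You should include this step (or equivalently argue via the multigrading by orbits, which the free construction with $qT$ fresh generators makes available).
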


\begin{proof}
The idea of the proof consists in application of
Theorem~\ref{kh-ma-shu10-1} to a free Lie algebra with operators
$FH$. Let $\Bbb F$ be a field  containing a primitive $n$th root
of unity the  characteristic of which does not divide $q$ and $n$,
and let $n,q,r,T$ be the numbers in the hypothesis of Theorem
\ref{combinatorial}. In the ring $\Bbb Z /n{\Bbb Z}$ we choose
arbitrary non-zero (not necessarily distinct) elements $i_1,
i_2,\cdots, i_T\in \Bbb Z /n{\Bbb Z}$. We consider a free Lie
algebra $K$ over the field $\Bbb F$ with $qT$ free generators in
the set
$$Y=\{\underbrace{y_{i_1}, y_{ri_1}, \ldots, y_{r^{q-1}i_1}}_{O(y_{i_1})},\,\,\,
\underbrace{y_{i_2}, y_{ri_2}, \ldots,
y_{r^{q-1}i_2}}_{O(y_{i_2})},\ldots,
\underbrace{y_{i_{T}},y_{ri_T}, \ldots,
y_{r^{q-1}i_T}}_{O(y_{i_T})}\},$$
where the subsets
$O(y_{i_s})=\{y_{i_s}, y_{ri_s}, \ldots, y_{r^{q-1}i_s}\}$
are called the {\it $r$-orbits} of the elements $y_{i_1},
y_{i_2},\ldots, y_{i_T}$. Here, as in the Index Convention, we do not use numbering indices, that is,
all elements  $y_{r^ki_j}$ are by definition different free generators, even if
indices coincide. (The Index Convention will come into force in a  moment.) For
every $i=0,\,1,\,\dots ,n-1$ we define
the subspace $K_i$ of the algebra $K$ generated by all
commutators in the generators $y_{j_s}$ in which the sum of %!!
indices of the elements occurring in them is equal  to $i$ modulo $n$. Then
$K=K_0\oplus K_1\oplus \cdots \oplus K_{n-1}$.  It is also obvious that
$ [K_i,K_j]\subseteq K_{i+j\,({\rm mod\, n)}}$; therefore this is a
$({\Bbb Z} /n{\Bbb Z})$-grading. The Lie algebra $K$ also has
the natural ${\Bbb N}$-grading
with respect to the generating set $Y$:
$$K=\bigoplus_i G_i(Y),$$
where $G_i(Y)$ is the subspace generated by all
commutators of weight $i$ in elements of the generating set $Y$.

We define an action of the Frobenius group $FH$ on $K$. We set
$k_i^{\varphi}=\omega^i k_i$ for $k_i\in K_i$ and extend this
action to $K$ by linearity.  Since $K$ is the direct sum
of homogeneous $\varphi$-components  and the characteristic of the ground field does not divide $n$, we have
$$K_i=\{k\in K \,\,\mid\,\, k^{\varphi}=\omega^i k \},$$
that is,  $K_i$
is the eigensubspace for the eigenvalue~$\omega^i$.
An action of the subgroup $H$ is defined on the generating set $Y$ as  follows:
 $H$ cyclically permutes the elements of the $r$-orbits $O(y_{i_s})$,
$s=1,\ldots, T$:
$$(y_{r^ki_s})^h=y_{r^{k+1}i_s},\,\,\,\, k=0,\ldots, q-2;\,\,\,\, (y_{r^{q-1}i_s})^h=y_{i_s}.$$
Thus, the  $r$-orbit of an element $y_{i_s}$ is also
the $H$-orbit
of this element.  Clearly, $H$ permutes the components $K_i$ according to the following rule:
${K_i}^h = K_{ri}$ for all
$i\in \Bbb Z/n\Bbb Z$.

Let  $J={}_{{\rm id}}\!\left< K_0 \right>$ be the ideal
generated by the $\varphi$-homogeneous component $K_0$. By definition
the ideal $J$ consists of  all linear combinations of  commutators
in elements of $Y$ each of which contains a subcommutator  with
zero sum of indices modulo $n$. Clearly, the ideal $J$
is generated by homogeneous elements with respect to the gradings
$K=\bigoplus_i G_i(Y)$ and $K=\bigoplus_{i=0}^{n-1} K_i$ and,
consequently, is homogeneous with respect to both gradings, that is,
$$J=\bigoplus_i J\cap G_i(Y)=\bigoplus_{i=0}^{n-1}J\cap K_i.$$
Note also that the ideal $J$ is obviously $FH$-invariant.

Let $I={}_{{\rm id}}\!\left< \gamma_{c+1}(C_K(H)) \right>^F$
be the smallest $F$-invariant  ideal containing the subalgebra
$\gamma_{c+1}(C_K(H))$ (this ideal can be called the $F$-closure of the ideal generated by this subalgebra).
 We claim that the ideal  $I$  is
homogeneous with respect to the grading $K=\bigoplus_{i=0}^{n-1} K_i$.
Since $q$ is not divisible by the characteristic of the ground field $\Bbb F$,
we have the equality
$C_K(H)=\{a+a^h+\cdots+a^{h^{q-1}}\mid\,\,a\in K\}$. It is easy to see that the ideal $I$ consists of  linear combinations
of all possible elements of the form
\begin{equation}\label{form-I}
\underbrace{\Big[(u_a+u_a^{h}+\dots +u_a^{h^{q-1}}),\, (v_b+\dots
+v_b^{h^{q-1}}),\dots , (w_d+\dots +w_d^{h^{q-1}})}_{c+1},
\,y_{j_1}, \,y_{j_2},\ldots\Big]^{\varphi^i},
\end{equation}
where $u_a,v_b,\ldots,w_d$ are $\varphi$-homogeneous commutators
(possibly, of different weights) in elements of $Y$
and $y_{j_1},y_{j_2},\ldots \in Y$.

We continue using the fact that $H$ permutes the
components $K_i$ by the rule ${K_i}^h = K_{ri}$ for all $i\in \Bbb
Z/n\Bbb Z$, and denote $a_k^{h^i}$ by $a_{r^ik}$ (under the
Index Convention). It is important that then the image of a commutator in
elements of the generating set $Y$ under the action of the automorphism
$h$ is again a commutator in elements of  $Y$.
Rewriting~\eqref{form-I} in the new notation we  obtain that
the ideal $I$ consists of linear combinations
of all possible elements of the form
\begin{equation}\label{form-I-2} \underbrace{\Big[(u_a+\dots +u_{r^{q-1}a}),\, (v_b+\dots
+v_{r^{q-1}b}),\dots , (w_d+\dots +w_{r^{q-1}d})}_{c+1},
\,y_{j_1}, \,y_{j_2},\ldots\Big]^{\varphi^i},
\end{equation}
 where $u_a,v_b\ldots,w_d$ are homogeneous commutators (possibly, of different
 weights) in elements of the set  $Y$ and
$y_{j_1},y_{j_2},\ldots \in Y$.

We denote the element~\eqref{form-I-2} by $z$ and represent it as a
sum of $\varphi$-homogeneous elements of the form
$k_0+k_1+\cdots+k_{n-1}$, where $k_i\in K_i$. For every $i=0,\ldots, n-1$ we set
$z_i= \sum_{s=0}^{n-1} \omega^{-is}z^{\varphi^s}$. It is easy to verify that $z_i$ belongs to the
eigensubspace for
the eigenvalue $\omega^i$, that is,  in $K_i$. Furthermore,
$nz=\sum_{j=0}^{n-1}z_i$. Since the characteristic of the field does not divide
$n$, the element $n$ is invertible in the field $\Bbb F$, that is,
$z=1/n\sum_{j=0}^{n-1}z_i$. By comparing the two representations of $z$ we
obtain that $k_i=(1/n) z_i=(1/n)\sum_{s=0}^{n-1}
\omega^{-is}z^{\varphi^s}$. But the element $(1/n)\sum_{s=0}^{n-1}
\omega^{-is}z^{\varphi^s}$, being a linear combination of the elements
$z, z^{\varphi}, \ldots, z^{\varphi^{n-1}}$ in  $I$, also belongs to
$I$. Consequently, $k_i\in I$, that is,  the ideal $I$ is homogeneous
with respect to the grading $K=\bigoplus_{i=0}^{n-1} K_i$.

Note that  $I$ is also homogeneous with respect to the grading
$K=\bigoplus_i G_i(Y)$ and is $FH$-invariant.

We consider the quotient Lie algebra $M=K/(J+I)$. Since the ideals $J$ and $I$
are homogeneous with respect to the gradings $K=\bigoplus_i G_i(Y)$ and
$K=\bigoplus_{i=0}^{n-1} K_i$, the quotient algebra $M$ has
the corresponding induced gradings. The group $FH$
acts on $M$ in such a way that $C_M(F)=0$ and
$\gamma_{c+1}(C_M(H)) =0$. By Theorem~\ref{kh-ma-shu10-1}
the quotient algebra $K/(J+I)$ is nilpotent of $(q,c)$-bounded class
$f=f(q,c)$.  Consequently,
$$
[y_{i_1}, y_{i_2},\ldots, y_{i_{T}} ]\in J+I={}_{{\rm id}}\!\left<
K_0 \right>+{}_{{\rm id}}\!\left<\gamma_{c+1}(C_K(H)) \right>^F.
$$
This means that the commutator $[y_{i_1}, y_{i_2},\ldots, y_{i_{T}}]$ can be represented modulo the ideal $I$  as a linear combination
of commutators  of weight $T$ %!!
 in elements of $Y$ belonging to the
$\varphi$-homogeneous component $K_{i_1+i_2+\cdots+i_T}$ that
contain a subcommutator with zero sum of indices modulo $n$.
It is claimed that for every $s=1,\ldots, T$ any such
commutator includes exactly  one element of the orbit
$$O(y_{i_s})=\{y_{i_s}, y_{i_s}^h, \ldots, y_{i_s}^{h^{q-1}}\}.$$
 For every $s=1,\ldots,T$ we consider the homomorphism $\theta_s$
extending the mapping %!!
$$
O(y_{i_s})
\rightarrow 0; \qquad  y_{i_k}\rightarrow y_{i_k} \quad
\text{if}\quad k\neq s.
$$
Clearly, the kernel $\mathrm{Ker}\,\theta_s$ is equal to the ideal
generated by the orbit $O(y_{i_s})$. Furthermore, clearly,
the ideal $I$ is invariant under $\theta_s$ (as
any homogeneous ideal). We apply the homomorphism $\theta_s$ to
the commutator $[y_{i_1}, y_{i_2},\ldots, y_{i_{T}}]$ and
its representation modulo $I$
as a linear combination of commutators in elements of $Y$ of weight
$T$ that contain a subcommutator with zero sum of indices modulo $n$.
We obtain that the image of
$\theta_j([y_{i_1}, y_{i_2},\ldots, y_{i_{T}}])$ is equal to $0$, as well as the image of any commutator
containing elements of the  orbit $O(y_{i_s})$. Hence
the sum of all those commutators in the representation of the element $[y_{i_1},
y_{i_2},\ldots, y_{i_{T}}]$ that do not contain elements of the  orbit
$O(y_{i_s})$ is equal to zero, and we can exclude all these
commutators from our consideration. By applying consecutively
$\theta_s$, $s=1,\ldots, T$, and excluding commutators not containing
elements of $O(y_{i_s})$, $s=1,\ldots,
t$, in the end we obtain modulo $I$
a linear combination of commutators each of which contains at least one element from
every orbit $O(y_{i_s})$,
$s=1,\ldots, T$. Since  under these transformations the weight of commutators remains the same
and is equal to $T$, no other elements can appear, and every commutator will
contain exactly one element in every orbit $O(y_{i_s})$,
$s=1,\ldots, T$.

Thus, we proved that in the free Lie algebra  $K$
generated by elements of the set $Y$ the commutator   $[y_{i_1},
y_{i_2},\ldots, y_{i_{T}}]$  can be
represented modulo the ideal $I$ as a linear combination of commutators of weight $T$ in elements
of $Y$, and for every $s=1,\ldots, T$ any of commutators of  this linear combination contains
exactly one element
in the orbit $O(y_{i_s})$ and has a subcommutator with zero sum
of indices modulo $n$.

Now suppose that $L$ is an arbitrary Lie algebra satisfying the
hypothesis of Theorem~\ref{combinatorial}. Let $x_{i_1},
x_{i_2},\ldots, x_{i_{T}}$ be arbitrary $\varphi$-homogeneous
elements with non-zero indices in the subspaces $L_{i_1},
L_{i_2},\ldots, L_{i_{T}}$, respectively. We define the following
homomorphism $\delta$ from the free Lie algebra $K$ into $L$:
$$
\delta (y_{i_s})=x_{i_s},\,\,\,\,\,
\delta(y_{r^ki_s})=x_{i_s}^{h^k}\,\,\, \text{for}\,\,\,
s=1,\ldots,T; \,\,\,\,\,k=1,\ldots,q-1.
$$
Then
$$\delta[y_{i_1}, y_{i_2},\ldots, y_{i_{T}}]=[x_{i_1}, x_{i_2},\ldots,
x_{i_{T}}];\,\,\,\,
 \delta (I)=0;\,\,\,\, \delta (J)= {}_{{\rm id}}\!\left< L_0 \right>;\,\,\, \delta (O(y_{i_s}))=O(x_{i_s}).$$
 By applying $\delta$ to the representation
 of the commutator $[y_{i_1}, y_{i_2},\ldots, y_{i_{T}}]$ constructed above, as the image we
 obtain a representation of the commutator $[x_{i_1},x_{i_2},\ldots, x_{i_{T}} ]$ as  a linear
combination of commutators in  elements of the set
$X=O(x_{i_1})\cup O(x_{i_2})\cup \cdots \cup O(x_{i_T})$.
Since $\delta(I)=0$, every commutator in this linear
combination has weight $T$,
contains exactly the same number of elements from every orbit $O(x_{i_s})$, $s=1,\ldots, T$, as
the original commutator, and has a subcommutator with zero sum
of indices modulo $n$. The theorem is proved.
\end{proof}

We define a {\it MKhSh-transformation} of a commutator $[x_{i_1},
x_{i_2},\ldots, x_{i_l}]$ its representation according to Theorem~\ref{combinatorial} as
linear combination of simple
commutators in elements of $X=O(x_{i_1})\cup
O(x_{i_2})\cup \cdots \cup O(x_{i_T})$ that contain exactly the same number of
elements from every orbit $O(x_{i_s})$, $s=1,\ldots, T$, as the original commutator  and have
 initial segment from $L_0$ of weight $\leq
T=f(q,c)+1$, that is,  commutators  of the form
$$
[c_0,y_{j_{w+1}},\ldots, y_{j_v}],
$$
where
$$
c_0=[y_{j_1},\ldots, y_{j_w}]\in L_0, \,\,w\leq T,\,\,
j_1+j_2+\cdots+j_w=0\,({\rm mod} \, n) ,\,\, y_{j_k}\in X,
$$
with subsequent re-denoting
$$
z_{i_1}=-[c_0, y_{j_{w+1}}],\,\, z_{i_s}=y_{j_{w+s}}
\,\,\,\text{for}\,\,\, s>1.
$$

The following assertion is obtained by repeated application
of the MKhSh-transformation.

\begin{proposition} \label{kh-ma-shu-transformation} Let $FH$ be a Frobenius group
with cyclic kernel $F=\langle \varphi\rangle$ of order $n$ and
complement $H=\langle h \rangle$ of order $q$, and let
$\varphi^{h^{-1}} = \varphi^{r}$ for some $1\leq r \leq n-1$.  Let
$\Bbb F$ be a field  containing a primitive $n$th root of unity
the  characteristic of which does not divide $q$ and $n$, and let
$L$ be a Lie algebra over $\Bbb F$. Suppose that $FH$ acts by
automorphisms on $L$ in such a way that the subalgebra of fixed
points $C_L(H)$ is nilpotent of class $c$ and
$L=\bigoplus_{i=0}^{n-1} L_i$, where $L_i=\{x\in L\mid
x^{\varphi}=\omega^ix\}$ are eigensubspace for eigenvalues
$\omega^i$ of the automorphism $\varphi$. Then for any positive
integers $t_1$ and $t_2$  there exists a $(t_1,t_2, q,c)$-bounded
positive integer  $V=V(t_1,t_2, q, c)$ such that any commutator in
$\varphi$-homogeneous elements $[x_{i_1}, x_{i_2},\ldots,
x_{i_{V}}]$ with non-zero indices of weight $V$ can be represented
as a linear combination of $\varphi$-homogeneous commutators in
elements of the set $X=\bigcup_{s=1}^V O(x_{i_s})$, where
$$O(x_{i_s})=\{x_{i_s},\,\,
\,x_{i_s}^h=x_{ri_s},\,\,\,\ldots\,\,\,
x_{i_s}^{h^{q-1}}=x_{r^{q-1}i_s}\,\},$$  and every such commutator
either has a subcommutator of the form
\begin{equation}\label{f1}
[u_{k_1},\ldots, u_{k_s}],
\end{equation}
where there are $t_1$ different initial segments with zero sum
of indices modulo~$n$, that is,
$$k_1+k_2+\cdots+k_{r_i}\equiv 0\; ({\rm mod}\, n),\,\,\, i=1,2,\ldots,t_1,$$
$$1<r_1<r_2<\cdots<r_{t_1}=s,$$
or has a subcommutator of the form
\begin{equation}\label{f2}
[u_{k_0}, c_1,\ldots, c_{t_2}],
\end{equation}
where $u_{k_0}\in X$, every $c_i$ belongs to $L_0$, $i=1,\ldots,t_2$,
and has the form
$$[x_{k_1},\ldots, x_{k_i}], \,\,\,\,\,x_{k_j}\in X$$ with zero sum of indices modulo $n$
$$k_1+\cdots+k_i\equiv 0\; ({\rm mod}\, n).$$
Here we can set $V(t_1,t_2,c,q)=
\sum_{i=1}^{t_1}((f(q,c)+1)^2t_2)^i+1$.
\end{proposition}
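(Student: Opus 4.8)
The plan is to deduce the proposition by iterating the MKhSh-transformation, keeping track of how a tower of nested initial $L_0$-subcommutators is built up at each step. Recall the effect of one MKhSh-transformation applied to the initial length-$(f(q,c)+1)$ segment of a $\varphi$-homogeneous commutator, written as a simple commutator in its current (possibly complex) letters, each of them $\varphi$-homogeneous with a non-zero index: it is replaced by a linear combination of commutators in the orbits of those letters in which the leading letter has been re-denoted as $-[c_0,y]$ with $c_0\in L_0$ of weight at most $f(q,c)+1$ and $y\in X$, and, by the orbit-preservation built into Theorem~\ref{combinatorial}, every term contains exactly as many letters from each orbit as the original commutator.

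I would prove the statement by induction on $t_1$, with $t_2$ fixed; write $T=f(q,c)+1$ and $B=T^2t_2$. For $t_1=0$ the claim is vacuous: the commutator itself is a subcommutator of the form~\eqref{f1} with no required initial $L_0$-segment, and $V(0,t_2,q,c)=1$. For the inductive step one runs the MKhSh-transformation repeatedly, each time on the initial length-$T$ segment of the current term, after reordering (using anticommutativity) so that the most heavily decorated current letter is leading. By the orbit-count invariant, the unique orbit-representative of the previously decorated letter --- which, by the re-denoting of the previous round, has the form $-[c',y']$ with $c'\in L_0$ and $y'\in X$ --- reappears in every new term, and two cases arise. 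If the freshly created $L_0$-segment $c_0$ contains this representative, then reorganising $c_0$ so that this representative is its first letter and using that $c'\in L_0$ is $H$-invariant exhibits the old tower as a strictly shorter initial $L_0$-segment of $c_0$, so the nesting depth grows by one. If $c_0$ does not contain it, then, expanding $-[c',y']$ at its place inside the bracket and applying the Jacobi identity, one obtains among the resulting terms some in which the depth grows nonetheless and some in which the $X$-letter carrying the decoration acquires one more attached $L_0$-commutator than before. Hence after at most $t_2-1$ such ``fan'' increments at a fixed nesting level one reaches a subcommutator $[u_{k_0},c_1,\dots,c_{t_2}]$ of the form~\eqref{f2}, and otherwise the depth increases within $t_2$ transformations; after $t_1$ depth-increases one reaches a subcommutator $[u_{k_1},\dots,u_{k_s}]$ of the form~\eqref{f1}. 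One chooses $V$ large enough that at each of the $t_1$ nesting levels enough weight remains for these up-to-$t_2$ transformations; a routine accounting gives the recursion $V(t_1,t_2,q,c)=B\cdot V(t_1-1,t_2,q,c)+1$ with $V(0,t_2,q,c)=1$, which solves to $V(t_1,t_2,q,c)=\sum_{i=1}^{t_1}(T^2t_2)^i+1$, the stated value. Finally, the passage from a free Lie algebra with operators $FH$ --- on which the argument is carried out, exactly as in the proof of Theorem~\ref{combinatorial} --- to the given algebra $L$ is effected by the homomorphism $\delta$ sending the free generator in position $k$ of the orbit of $x_{i_s}$ to $x_{i_s}^{h^k}$; since $X$ is $H$-invariant, the image is a linear combination of commutators in elements of $X$.

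The main obstacle is the combinatorial bookkeeping in the inductive step: one must verify that the re-denotings and the anticommutativity/Jacobi rewritings used to expose $L_0$-subcommutators as initial segments preserve both the already-built nested tower and the orbit-count invariant, and that the fan increments really do accumulate on one and the same $X$-letter $u_{k_0}$ instead of dispersing over several letters --- and it is precisely here that the orbit-preservation of Theorem~\ref{combinatorial} is indispensable. Once the invariant is correctly formulated, matching the weight accounting to the stated value of $V$ is routine.
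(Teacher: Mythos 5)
Your proposal follows essentially the same approach as the paper: iterate the MKhSh-transformation, keep track of nested initial $L_0$-segments and of $L_0$-commutators accumulating on a single letter, and observe that the orbit-preservation built into Theorem~\ref{combinatorial} is what makes the bookkeeping survive the appearance of $h$-images. That is exactly what the paper does, except that the paper does not spell out this bookkeeping at all; it simply states that the proof ``practically word-for-word repeats'' the HKK-transformation argument of \cite{kh2} (Proposition 4.4.2 in \cite{kh4}), with MKhSh-transformations in place of HKK-transformations, and notes the one genuine difference, namely that $h$-orbit images of the original elements may now appear --- which forces the conclusion to be phrased in terms of the $h$-orbits, as you also observe.

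One small point: you do not need to carry out the iteration inside a free Lie algebra and then push down by $\delta$. Theorem~\ref{combinatorial} already holds for $L$ itself, so each MKhSh-transformation may be applied directly to the commutators of $L$; the free-algebra detour is only needed once, in the proof of Theorem~\ref{combinatorial}. Also be careful with the case split in your inductive step: the assertion that, when the decorated letter falls outside the new $L_0$-segment, the ``fan'' increments accumulate on one and the same letter $u_{k_0}$ rather than spreading over several letters requires a reordering argument (bringing the decorated letter, or its orbit image, into the position immediately after $c_0$ before re-denoting, at the cost of extra Jacobi terms which must themselves be shown to fall into the inductive scheme). This is precisely the combinatorial core that the paper delegates to \cite{kh2} and \cite{kh4}, and it is not fully discharged in your sketch, though you correctly identify it as the main obstacle.
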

\begin{proof} The proof practically word-for-word repeats the proof
of a proposition in~\cite{kh2} (see also Proposition 4.4.2 in
\cite{kh4}), but instead of the HKK-transformation one should repeatedly apply the
MKhSh-transformation. In contrast to the HKK-transformation, which always produces commutator in the original elements $x_{i_1}, x_{i_2},\ldots, x_{i_{V}}$, in our case after the MKhSh-transformation in commutators there may appear some images of the elements $x_{i_1}, x_{i_2},\ldots,
x_{i_{V}}$ under the action of the automorphism $h$. This detail does not affect the course of the proof,
but it precisely why the conclusion of
Proposition~\ref{kh-ma-shu-transformation} involves commutators
in elements of the $h$-orbits of the original elements.
\end{proof}

\section{Representatives and generalized centralizers}

Let $FH$ be a Frobenius group with kernel  $F=\langle
\varphi\rangle$ of order $n$ and complement  $H=\langle h\rangle$
of order $q$, and let $\varphi^{h^{-1}} = \varphi^{r}$ for some
$1\leq r \leq n-1$.  Suppose that the group $FH$  acts
by automorphisms on a Lie algebra $L$, and the subalgebra  $C_L(H)$
of fixed points of the complement is nilpotent
of class $c$, while the subalgebra $L_0=C_L(\varphi)$ of fixed points  of the kernel has
finite dimension $m$.
First suppose that $L=\bigoplus_{i=0}^{n-1} L_i$, where $L_i=\{x\in L\mid
x^{\varphi}=\omega^ix\}$ are eigensubspace for
eigenvalues~$\omega^i$ of the automorphism $\varphi$ (which is actually the main case).

We begin construction of generalized centralizers by  induction on the level
--- a parameter taking integer values from $0$  $T$, where
the number $T=T(q,c)= f(q,c)+1$ is define in
Theorem~\ref{combinatorial}. A generalized centralizer $L_j(s)$
of level $s$ is a certain subspace
 of the $\varphi\hs$-homogeneous component~$L_j$. Simultaneously with construction of
generalized centralizers we fix certain elements of them ---
representatives of various levels, --- the total number of which
is $(m,n,q,c)$-bounded.

\begin{definition} The {\it  pattern\/} of a commutator in $\varphi$-homogeneous elements
(in~$L_i$) is defined as its bracket structure together with the arrangement of
indices under the Index Convention. The {\it weight\/} of a pattern is the
weight of the commutator. The commutator itself is called the value of its
pattern on given elements.
\end{definition}

\begin{definition} Let $\vec x=(x_{i_1},\dots ,x_{i_k})$ be
some ordered tuple of elements $x_{i_s}\in L_{i_s}$,
$i_s=1,\ldots, n-1$, such that $i_1+\dots + i_k\not\equiv 0\,
({\rm mod}\, n).$ We set $j=-i_1-\dots - i_k\,({\rm mod}\, n)$ and
define the mappings %!!
\begin{equation}\label{vartheta}
\vartheta _{\vec x}: y_j\rightarrow  [y_j, x_{i_1},
 \dots , x_{i_k}].
\end{equation}
\end{definition}
By linearity they all are homomorphisms of the  subspace
 $L_j$ into $L_0$.  Since  $\dim L_0= m$, we have $\dim (L_j/{ \rm Ker}\,
\vartheta_{\vec x})\leq m$.

\begin{notation} Let $U=U(q,c)$ denote the number $V(T, T-1, q,c )$, where
$V$ is the function in the conclusion of
Proposition~\ref{kh-ma-shu-transformation}. \end{notation}

{\bf Definition of level 0.} At level 0 we only fix
representatives of level $0$.  First, for every pattern ${\bf P}$
of a simple commutator of weight $\leq U$ with indices $i\ne 0$ and zero
sum of indices, among all values of this pattern ${\bf P}$ on
$\varphi$-homogeneous elements in $L_{i},$ $i\ne 0$   we choose
commutators $c$ that form a basis of the  subspace spanned by all
values of this pattern on $\varphi$-homogeneous elements in
$L_{i}$, $i\ne 0$. The elements of  $L_{j}$,\, $j\ne 0$, occurring in
these fixed representations of the commutators $c$ are called
{\it representatives of level~$0$}. Representatives of  level $0$
are denoted by $x_j (0)$ under the Index Convention
(Recall that the same symbol can denote different
elements). Furthermore, together with every representative $x_j(0)\in L_j$,
$j\ne 0$, we also fix all elements of the orbit $O(x_j(0))$ of this
element under the action of the automorphism $h$ %!!
$$O(x_j(0))=\{x_j(0), x_j(0)^h,\,\ldots,
x_j(0)^{h^{q-1}} \},
$$
and also call them {\it representatives
of level~$0$}.  Elements of these orbits are denoted by
$x_{r^{s}j}(0):=x_j(0)^{h^{s}}$ under the Index Convention
(since $L_i^h\leq L_{ri}$).

Since the total number of  pattern ${\bf P}$ under  consideration is
$(n,q,c)$-bounded,  the dimension of $L_0$
  is at most $m$, and the number of elements  in every $h$-orbit is equal to~$q$, it follows that the number of
representatives of level  $0$ is $(m,n,q,c)\hs$-bounded.

{\bf Definition of level 1.}  We define the
 {\it generalized centralizers
 $L_j(1)$ of level~$1$} by setting, for every
$j\ne 0$,
$$L_j(1)=\bigcap_{\vec x}\,
\mbox{Ker}\, \vartheta  _{\vec x},
$$
where $\vec x=\left(x_{i_1}(0),\, \ldots ,\, x_{i_k}(0)\right) $ runs over all possible ordered tuples of length
 $k$ for all $k\leq U$  consisting of representatives of
level $0$ such that $j+i_1+\cdots +i_k\equiv 0\, (\mbox{mod}\,n).$
 Since the number of representatives of level  $0$ is $(m,n,q,c)\hs$-bounded, the intersection  here is taken over  a
$(m,n,q,c)\hs$-bounded number of subspaces
of codimension $\leq m$ in $L_j$. Hence $L_j(1)$
is a subspace of $(m,n,q,c)\hs$-bounded codimension in $L_{j}$.
 For brevity we also call elements of $L_j(1)$  {\it
centralizers of level $ 1$} and fix for then the notation
 $y_j(1)$ (under the Index Convention).

By construction every element
 $y_j(1)\in L_j(1)$ has the centralizer property
with respect to  representatives of  level 0:
$$\left[ y_j(1), x_{i_1}(0),\, \ldots ,\,
x_{i_k}(0) \right]=0,
$$
as soon as $k\leq U$ and $j+ i_1+\cdots +i_k\equiv 0\, (\mbox{mod}\, n)$.

We now fix representatives of level $1$. For every pattern
${\bf P}$ of a simple commutator
  of weight $\leq U$ with nonzero indices  and zero sum of indices modulo $n$, among
all values of the   pattern ${\bold P}$ on
 homogeneous elements in $L_{i}(1)$, $i\ne 0$,   we choose commutators
that form a basis of the subspace
 spanned by all values of  this pattern on homogeneous elements in
 $L_{i}(1)$, $i\ne 0$. The elements occurring in these
 commutators are called {\it representatives of level~$1$ } and are denoted by
$ x_{j}(1)$ (under the Index Convention). Furthermore,
for every (already fixed) representative $x_j(1)$ of level
$1$ we fix  all elements of the  $h$-orbit
$$O(x_j(1))=\{x_j(1), x_j(1)^h,\,\ldots, x_j(1)^{h^{q-1}} \},$$
and also call them
{\it representatives of level~$1$}. These elements are denoted by
$x_{r^{s}j}(1):=x_j(1)^{h^{s}}$ under the Index Convention %!!
 (since $L_i^h\leq L_{ri}$).

 Since the number of  pattern under consideration is $(n,q,c)$-bounded, and the
dimension of the subspace $L_0$ is  equal to $m$, the total number
of representatives of level  $1$ is  $(m,n,
q,c)\hs$-bounded.

{\bf Definition of level $\pmb{t>1}$.}
 Suppose that we have already fixed
 a $(m,n,q,c)$--bounded number of representatives of levels $<t$.
We define {\it generalized centralizers of level $t$}
%!! EST' DAL'SHE (or, in brief, centralizers of level $t$)
by setting,  for every
 $j\ne 0$,
$$
L_j(t)=\bigcap_{\vec x} {\rm Ker}\,   \vartheta_{\vec x},
$$
where $\vec x=\left( x_{i_1}(\varepsilon_1),  \dots ,
x_{i_k}(\varepsilon_k)\right) $ runs over all possible
ordered tuples of all lengths
 $k\leq U$ consisting of representatives of (possibly different) levels
$<t$ such that
$$j+ i_1+\cdots + i_k\equiv 0\; ({\rm mod}\, n).$$

For brevity we also call elements of $L_j(t)$ {\it
centralizers of level $t$} and fix for them the notation
 $y_j(t)$ (under the Index Convention).

 The number of representatives of all levels $<t$ is
 $(m,n,q,c)\hs$-bounded and $\dim L_j/{ \rm Ker}\,\vartheta_{\vec
x} \leq m$ for all $\vec x$. Hence the intersection here is taken over
a $ (m,n,q,c)\hs$-bounded number of subspaces
of codimension $\leq m$ in $L_j$, and therefore $L_j(t)$ also has
$(m,n,q,c) \hs$-bounded codimension in the subspace
$L_{j}$.

By definition a centralizer $y_j(t)$ of level
 $t$ has the following centralizer property with respect to
representatives of lower levels:
\begin{equation}\label{centalizer-property}
\left[ y_j(t), x_{i_1}(\varepsilon_1),\, \ldots ,\,
 x_{i_k}(\varepsilon_k) \right]=0,
\end{equation}
as soon as $j+ i_1+\cdots +i_k\equiv 0\, (\mbox{mod}\, n)$, \
$k\leq U$, and the elements $x_{i_s}(\varepsilon_s)$ are representatives
of any (possibly different) levels $\varepsilon_s<t$.

We now fix representatives of level $t$. For every pattern
${\bold P}$ of a simple commutator of weight $\leq U$ with nonzero
indices and zero sum of indices, among all values of the pattern
${\bold P}$ on $\varphi$-homogeneous elements in $L_{i}(t)$, $i\ne
0$, we choose commutators that form a basis of the subspace
spanned by all the values of the  pattern ${\bold P}$ on
$\varphi$-homogeneous elements in $L_{i}(t)$, $i\ne 0$. The homogeneous
elements occurring in these commutators are called {\it
representatives of  level $t$} and are denoted by $ x_{j}(t)$ (under the Index Convention).
Next, for every (already fixed) representative $x_j(t)$ of level $t$, we fix
the elements of the $h$-orbit
$$O(x_j(t))=\{x_j(t), x_j(t)^h,\,\ldots,
x_j(t)^{h^{q-1}} \},
$$
and call them also
 {\it representatives of level~$t$}.  These elements are denoted by
$x_{r^{s}j}(t):=x_j(t)^{h^{s}}$ under the Index Convention (since $L_j^{h^s}\leq L_{r^sj}$).
Since the number of  patterns under consideration is $(n,q,c)\hs$-bounded and the
dimension of the subspace $L_0$ is equal to $m$, the total number
of representatives of level~$t$ is $(m,n, q,c)\hs$-bounded.
The construction of centralizers and representatives of levels  $\leq T$
is complete.

\section{Properties of centralizer and representatives}

Recall that we fixed the notation $T=T(q,c)=f(q,c)+1$
(for the maximal level) and  $U=V(T, T-1, q,c )$, where $f$, $V$
are functions in the conclusion of Theorem~\ref{kh-ma-shu10-1} and Proposition~\ref{kh-ma-shu-transformation}, respectively.

It is clear from the construction of generalized centralizers that
\begin{equation}\label{vkljuchenie}
L_j(k+1)\leq L_j(k)
\end{equation}
for all $j\ne 0$ and all $k=1,\ldots, T$.

The following lemma follows immediately from the definitions of level~0 and
levels~$t>0$ and from the inclusions~\eqref{vkljuchenie}; we shall usually refer to this lemma as the ``freezing'' procedure.

\begin{lemma}[freezing procedure]\label{zamorazhivanie}
Every simple commutator
$$
[y_{j_1}(k_1),y_{j_2}(k_2),\dots,y_{j_w}(k_w) ]
$$
of weight $w\leq U$
 in centralizers of levels $k_1,k_2,\dots, k_w$ with zero modulo $n$ sum of indices
$$j_1+\cdots+j_w \equiv  0\; ({\rm mod}\, n)
$$
can be represented $($frozen\/$)$ as a linear
combination of commutators
$[x_{j_1}(s),x_{j_2}(s),\dots,x_{j_w}(s) ]$ of the same pattern in
representatives of  any level $s$ satisfying
$ 0\leq s\leq \min \{ k_1,k_2,\dots,k_w \}$.
\end{lemma}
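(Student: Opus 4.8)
The plan is to deduce the lemma directly from the construction of the generalized centralizers $L_j(s)$ and of the representatives of each level, together with the chain of inclusions~\eqref{vkljuchenie}; no genuinely new idea is needed beyond careful bookkeeping.

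First I would use~\eqref{vkljuchenie} (together with the trivial inclusion $L_j(1)\leq L_j$, which makes it convenient to set $L_j(0):=L_j$) to record that, since $0\leq s\leq\min\{k_1,\dots,k_w\}$, every entry of the commutator satisfies $y_{j_t}(k_t)\in L_{j_t}(k_t)\leq L_{j_t}(s)$ for $t=1,\dots,w$. Hence the commutator $[y_{j_1}(k_1),\dots,y_{j_w}(k_w)]$ is one of the \emph{values}, on $\varphi$-homogeneous elements of the subspaces $L_i(s)$ with $i\ne 0$, of its pattern~${\bf P}$. Here ${\bf P}$ is the pattern of a simple commutator of weight $w$; by hypothesis $w\leq U$, its indices $j_1,\dots,j_w$ are all nonzero, and their sum is zero modulo $n$. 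So ${\bf P}$ is exactly one of the patterns over which the construction of representatives of level $s$ ranges.

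Next I would invoke what that construction produces: among all values of ${\bf P}$ on $\varphi$-homogeneous elements of $L_i(s)$, $i\ne 0$, one selects commutators that form a basis of the subspace these values span, and the $\varphi$-homogeneous elements occurring in those basis commutators are, by definition, representatives of level $s$, written $x_j(s)$ under the Index Convention. Thus each basis commutator has the form $[x_{j_1}(s),x_{j_2}(s),\dots,x_{j_w}(s)]$ and has the same pattern ${\bf P}$. Since $[y_{j_1}(k_1),\dots,y_{j_w}(k_w)]$ lies in the span of the values of ${\bf P}$ on $\varphi$-homogeneous elements of the $L_i(s)$, it is a linear combination of these basis commutators --- which is precisely the claimed frozen form. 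The case $s=0$ is identical, using that level-$0$ representatives are defined via values of ${\bf P}$ on the full components $L_i$, $i\ne 0$, rather than on proper subspaces.

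I do not expect a real obstacle: the statement is a direct unravelling of the definitions. The one place that wants a line of care is checking that the pattern ${\bf P}$ of the commutator being frozen is admissible for the level-$s$ construction --- that is, that it has weight at most $U$, that all its indices are nonzero, and that its total index vanishes modulo $n$ --- and each of these is immediate from, respectively, the hypothesis $w\leq U$, the fact that $y_{j_t}(k_t)\in L_{j_t}$ with $j_t\ne 0$, and the hypothesis $j_1+\cdots+j_w\equiv 0\ (\mathrm{mod}\ n)$.
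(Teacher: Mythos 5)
Your argument is correct and coincides with the paper's own, which simply declares that the lemma ``follows immediately from the definitions of level $0$ and levels $t>0$ and from the inclusions~\eqref{vkljuchenie}''; you have spelled out exactly the unwinding of those definitions (the inclusion $L_{j_t}(k_t)\leq L_{j_t}(s)$, admissibility of the pattern, and the spanning property of the level-$s$ basis commutators) that the paper leaves implicit.
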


\begin{definition}  We define a {\it
quasirepresentative of weight $w$ and level $k$\/} to be any
commutator of weight $w\geq 1$ which involves exactly one
representative $x_i(k)$
 of level $k$ and $w-1$ representatives $x_s(\varepsilon_s)$ of any lower levels  $\varepsilon_s<k$.
 Quasirepresentatives of level $k$ (and only they) are denoted by
$\hat{x}_{j}(k)\in L_j$ under the Index Convention; here, obviously, the index $j$ is equal modulo $n$ sum of
indices of all
the elements occurring in the quasirepresentative. Quasirepresentatives of weight
$1$ are precisely  representatives.
\end{definition}

\begin{lemma}\label{invariance} If $y_j(t)\in L_j(t)$ is a centralizer of level
$t$, then $(y_j(t))^h$ is a centralizer of level $t$. If
$\hat{x}_j(t)$ is a quasirepresentative of level $t$, then
$(\hat{x}_j(t))^h$ is a quasirepresentative of level $t$.
\end{lemma}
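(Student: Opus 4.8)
The plan is to exploit a structural feature built into the construction in \S\,4: the set of representatives of each level is closed under the action of $h$. Indeed, whenever a representative $x_j(s)$ is fixed, the whole orbit $O(x_j(s))=\{x_j(s),x_j(s)^h,\dots,x_j(s)^{h^{q-1}}\}$ is simultaneously declared to consist of representatives of level $s$. Hence $h$ permutes the (finite) set of representatives of any given level $s\le T$, and in particular permutes the set of all representatives of levels $<t$. I would record this observation first.

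For the first assertion, take $y_j(t)\in L_j(t)$. Since $L_j^h\le L_{rj}$, the element $(y_j(t))^h$ lies in $L_{rj}$, and $rj\ne0$ because $r$ is invertible modulo $n$ (being a primitive $q$th root of unity in $\mathbb Z/n\mathbb Z$, $r\cdot r^{q-1}\equiv1$). It remains to verify that $(y_j(t))^h$ annihilates every admissible tuple for $L_{rj}(t)$. Let $(x_{l_1}(\eta_1),\dots,x_{l_k}(\eta_k))$ be a tuple of representatives of levels $\eta_s<t$ with $k\le U$ and $rj+l_1+\dots+l_k\equiv0\pmod n$. By the closure property, each $x_{l_s}(\eta_s)$ equals $(x_{i_s}(\eta_s))^h$ for some representative $x_{i_s}(\eta_s)$ of level $\eta_s$ with $l_s=ri_s$ (e.g.\ $x_{i_s}(\eta_s)=x_{l_s}(\eta_s)^{h^{q-1}}$). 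Applying the automorphism $h$,
$$[(y_j(t))^h,x_{l_1}(\eta_1),\dots,x_{l_k}(\eta_k)]=[y_j(t),x_{i_1}(\eta_1),\dots,x_{i_k}(\eta_k)]^h.$$
Now $r(j+i_1+\dots+i_k)=rj+l_1+\dots+l_k\equiv0\pmod n$, so by invertibility of $r$ we get $j+i_1+\dots+i_k\equiv0\pmod n$; the centralizer property~\eqref{centalizer-property} of $y_j(t)$ then makes the inner commutator vanish, hence so does its $h$-image. As the tuple was arbitrary, $(y_j(t))^h\in L_{rj}(t)$, a centralizer of level $t$.

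For the second assertion, write the quasirepresentative $\hat x_j(t)$ of weight $w$ as a commutator whose $w$ entries are one representative $x_i(t)$ of level $t$ and $w-1$ representatives of levels $<t$. Since $h$ is an automorphism, $(\hat x_j(t))^h$ is the commutator with the same bracket structure whose entries are the $h$-images of those elements; by the closure property $x_i(t)^h$ is again a representative of level $t$ and each of the other images is a representative of a level $<t$. Thus $(\hat x_j(t))^h$ is a commutator of weight $w$ involving exactly one representative of level $t$ and $w-1$ representatives of lower levels, i.e.\ a quasirepresentative of level $t$, lying in $L_{rj}$. The only point needing care — and the closest thing to an obstacle — is the index bookkeeping in the first part: one must check that the admissibility condition on the sum of indices is preserved under the bijection induced by $h$, which is exactly where the invertibility of $r$ modulo $n$ is used. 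Everything else is a direct unwinding of the definitions of the generalized centralizers, representatives, and quasirepresentatives.
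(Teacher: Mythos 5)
Your proof is correct and follows essentially the same route as the paper: both use the fact that $h$ permutes the representatives of each fixed level (built into the construction via $h$-orbits), and both verify the centralizer property for $(y_j(t))^h$ by pulling the admissible tuple back through $h^{q-1}$ and applying the centralizer property of $y_j(t)$. The only cosmetic difference is that you apply $h$ as an automorphism to a whole commutator, while the paper states the vanishing of the pre-image commutator and then applies $h$ to the resulting equation; these are the same computation.
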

\begin{proof}
Since $(y_{j}(t))^h\in L_{rj}$, we can denote $(y_{j}(t))^h$
by $y_{rj}$. Let
$x_{i_1}(\varepsilon_1), \dots ,
 x_{i_k}(\varepsilon_k)$, $k\leq
U$, be arbitrarily chosen representatives of  any (possibly different) levels $\varepsilon_s<t$ such that
$rj+i_1+i_2+\cdots+i_k\equiv 0 \; ({\rm mod}\, n) $. By construction
the elements
$(x_{i_s}(\varepsilon_s))^{h^{q-1}}=x_{r^{q-1}i_s}(\varepsilon_s)$,
$s=1,\ldots,k$,  are also representatives of  the corresponding
levels $\varepsilon_s$.  By hypothesis the element
$y_j(t)=(y_{rj})^{h^{q-1}}$
is a centralizer of level $t$; therefore it has the centralizer
property~\eqref{centalizer-property} with respect to
representatives of lower levels:
$$ [y_j(t), x_{r^{q-1}i_1}(\varepsilon_1),  \dots ,
 x_{r^{q-1}i_k}(\varepsilon_k)]=0,
$$
since
 $j+r^{q-1}i_1+\cdots+ r^{q-1}i_k \equiv  0 \; ({\rm mod}\, n)$ and $k\leq U$. By applying the
automorphism  $h$ to the last equation we obtain that
$$[y_{rj}, x_{i_1}(\varepsilon_1),  \dots ,
 x_{i_k}(\varepsilon_k)]=0,
$$
that is, the element $(y_{j}(t))^h=y_{rj}$ is a centralizer
 of level $t$.

We now consider a quasirepresentative $\hat{x}_j(t)$ of weight $k$
of level $t$. By definition this element has the form %!!
$$\hat{x}_j(t)= [x_{i_1}(t),\,\, x_{i_2}(\varepsilon_2), \dots ,
 x_{i_k}(\varepsilon_{k})],
$$
where  $x_{i_1}$ is a representative of level $t$
and $x_{i_2}(\varepsilon_2), \dots ,
 x_{i_k}(\varepsilon_k)$ are representatives of any (possibly different) levels
$\varepsilon_s<t$ such that $i_1+i_2+\cdots+i_k\equiv j\; ({\rm
mod}\, n)$.  By construction the elements
$(x_{i_s}(\varepsilon_s))^{h}=x_{ri_s}(\varepsilon_s)$,
$s=1,\ldots,k$,  are also representatives of the same
levels~$\varepsilon_s$. Therefore,
$$(\hat{x}_j(t))^h=[(x_{i_1}(t))^h,
(x_{i_2}(\varepsilon_2))^h, \dots , (x_{i_k}(\varepsilon_{k}))^h]=[x_{ri_1}(t),
x_{ri_2}(\varepsilon_2), \dots ,  x_{ri_k}(\varepsilon_{k})],
$$
whence $(\hat{x}_j(t))^h$ is
 also a quasirepresentative of level $t$.
\end{proof}

In what follows, when using Lemma~\ref{invariance} we shall by default denote the elements
$y_{j}(t)^{h^s}$ by $y_{r^sj}(t)$, and the elements $\hat{x}_j(t)^{h^s}$
by $\hat{x}_{r^sj}(t)$.

 Lemma~\ref{invariance} also implies that representatives of level $t$,
 elements $x_j(t), x_j(t)^h,\ldots, x_j(t)^{h^{q-1}}$,   are centralizers of
 level $t$.

\begin{lemma}\label{quasirepresentatives} Any commutator involving exactly one
centralizer ${y}_{i}(t)$ of level
 $t$ and quasirepresentatives of level $< t$ is equal to
 $0$  if the sum of indices of the elements occurring in it is equal to $0$ and the sum
of weight of all these elements is t most~$U+1$.
  \end{lemma}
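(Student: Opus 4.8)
The plan is to reduce an arbitrary commutator of the stated type to the defining centralizer property~\eqref{centalizer-property} by unravelling the quasirepresentatives, and to handle the fact that the centralizer $y_i(t)$ need not sit at the head of the commutator by an application of the Jacobi identity together with Lemma~\ref{zamorazhivanie}. Write the commutator as $C=[\,\ldots,\hat{x}_{j_1}(\varepsilon_1),\ldots,y_i(t),\ldots,\hat{x}_{j_w}(\varepsilon_w),\ldots\,]$, where each $\hat{x}_{j_l}(\varepsilon_l)$ is a quasirepresentative of some level $\varepsilon_l<t$ and the total weight is at most $U+1$. First I would use the remark before Lemma~\ref{quasirepresentatives} (via the Jacobi identity in the form $[a,[b,c]]=[a,b,c]-[a,c,b]$, recalled in \S\,2) to rewrite $C$ as a linear combination of simple commutators of the same weight in the \emph{same} elements, each of which begins with $y_i(t)$. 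Since the total weight is at most $U+1$ and $y_i(t)$ consumes one slot, each resulting simple commutator has the shape $[\,y_i(t),\ z_1,\ldots,z_p\,]$ where the $z$'s are quasirepresentatives of levels $<t$ and $p\le U$; the sum of indices is still $0$ modulo $n$ since rearrangement by Jacobi preserves the multiset of indices.

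Next I would expand each quasirepresentative $z_l=\hat{x}_{j_l}(\varepsilon_l)$ according to its definition: it is itself a simple commutator $[x_{a_1}(\varepsilon_l),x_{a_2}(\delta_2),\ldots]$ in representatives, exactly one of level $\varepsilon_l$ and the rest of lower levels. Substituting these expansions into $[\,y_i(t),z_1,\ldots,z_p\,]$ and again applying the Jacobi identity to convert complex commutators into simple ones in the underlying representatives, I obtain a linear combination of simple commutators $[\,y_i(t),x_{b_1}(\mu_1),\ldots,x_{b_N}(\mu_N)\,]$ in which every $x_{b_s}(\mu_s)$ is a representative of a level $\mu_s<t$, the indices still sum to $0$ modulo $n$, and the weight $N+1$ is at most $U+1$, i.e.\ $N\le U$. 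Each such commutator vanishes by the centralizer property~\eqref{centalizer-property} of $y_i(t)$, which is exactly the hypothesis $j+b_1+\cdots+b_N\equiv 0$ and $N\le U$. Hence $C=0$.

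The main point to be careful about — and the step I expect to be the real obstacle — is the bookkeeping of weights through the two successive applications of the Jacobi identity and the quasirepresentative expansion: one must check that the total weight never increases, so that at the end $N\le U$ and the centralizer property is genuinely applicable. This is why the bound in the statement is $U+1$ rather than $U$: after the commutator (of weight $\le U+1$) is rewritten with $y_i(t)$ at the front, the remaining initial segment fed into~\eqref{centalizer-property} has weight $\le U$. The Jacobi rewriting and the expansion of quasirepresentatives are both weight-preserving (they only redistribute brackets among a fixed multiset of homogeneous elements), so this bound propagates correctly; the index-sum condition is likewise preserved at every stage because it depends only on the multiset of $\varphi$-homogeneous components involved, which these transformations do not change. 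One subtlety worth flagging: in expanding a quasirepresentative one must take its \emph{given} fixed expression as a commutator in representatives (the expression fixed when that representative was chosen), so that each factor is literally a representative of a level $<t$, as required by~\eqref{centalizer-property}; no appeal to Lemma~\ref{zamorazhivanie} is actually needed here, only the definitions of quasirepresentative and of the centralizer property.
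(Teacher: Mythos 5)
Your proof is correct and takes essentially the same route as the paper: expand the quasirepresentatives into their defining commutators in representatives and use the Jacobi and anticommutativity identities to collect everything into simple commutators of weight $\le U+1$ headed by $y_i(t)$, which then vanish by the centralizer property~\eqref{centalizer-property}. The only cosmetic difference is that you run the Jacobi rewriting twice (once before and once after unravelling the quasirepresentatives) where the paper does it in a single pass, and your closing observation that Lemma~\ref{zamorazhivanie} is not actually needed is accurate.
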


\begin{proof} Based on the definitions, by the Jacobi and anticommutativity identities
we can represent this
commutator as a linear combination of simple commutators of  weight
 $\leq U+1$ beginning with the centralizer
of level $t$ and involving in addition only some representatives of  levels
$<t$. Since the sum of indices of all these elements is also equal to
$0$, all these commutators are equal to
 $0$ by~\eqref{centalizer-property}.
 \end{proof}

\section{Main theorem}\label{section-main-teorem}

 In the proof of Theorem~\ref{t-lie-algebra} the main case is when $L$ is a
$\varphi$-homogeneous  $\Bbb Z/n\Bbb
Z$-graded Lie algebra, that is,  $L=L_0\oplus L_1\oplus \cdots
\oplus L_{n-1}$.

\begin{proposition}\label{main-proposition} Theorem~\ref{t-lie-algebra} holds
for $\varphi$-homogeneous $\Bbb Z/n\Bbb Z$-graded Lie algebras
$L=L_0\oplus L_1\oplus \cdots \oplus L_{n-1}$.
\end{proposition}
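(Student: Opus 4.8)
The plan is to use the generalized centralizers and representatives constructed in \S\,4--5, take the subalgebra $Z$ generated by all the highest-level centralizers $L_i(T)$, $i\ne 0$, show that $Z$ has $(m,n,q,c)$-bounded codimension in $L$, and then prove that $Z$ is nilpotent of $(q,c)$-bounded class by reducing, via Proposition~\ref{kh-ma-shu-transformation}, every long commutator in homogeneous elements of $Z$ to zero.

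\textbf{Step 1: the codimension estimate.} At each level $s\leq T$ and each $j\ne 0$, the subspace $L_j(s)$ has $(m,n,q,c)$-bounded codimension in $L_j$, since it is an intersection of a $(m,n,q,c)$-bounded number of kernels of the maps $\vartheta_{\vec x}$, each of codimension $\leq m$. Hence $\bigoplus_{j\ne 0} L_j(T)$ has $(m,n,q,c)$-bounded codimension in $\bigoplus_{j\ne 0}L_j$, and it remains to deal with $L_0$. The point is that $L_0=C_L(\varphi)$ has dimension $m$; one shows that $Z=\langle L_i(T)\mid i\ne 0\rangle$ together with $L_0$ spans a subalgebra of $L$ of $(m,n,q,c)$-bounded codimension (in fact $Z+L_0$ differs from $L$ only in a bounded piece coming from the bounded codimensions of the $L_i(T)$), and then one works inside this subalgebra, or argues that $Z$ itself already has bounded codimension.

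\textbf{Step 2: nilpotency of $Z$.} This is the heart of the matter. Take a homogeneous commutator $[z_{i_1},\ldots,z_{i_V}]$ of weight $V=V(T,T-1,q,c)=U$ in elements $z_{i_s}$ lying in the generating subspaces $L_{i_s}(T)$, $i_s\ne 0$. By Lemma~\ref{invariance} the whole $h$-orbit of each $z_{i_s}$ consists of level-$T$ centralizers, so Proposition~\ref{kh-ma-shu-transformation} applies and represents this commutator as a linear combination of homogeneous commutators in the $h$-orbit elements, each of which contains either a subcommutator~\eqref{f1} with $t_1=T$ nested initial segments of zero index sum, or a subcommutator~\eqref{f2} of the form $[u_{k_0},c_1,\ldots,c_{t_2}]$ with $t_2=T-1$ elements $c_i\in L_0$, each $c_i$ a homogeneous commutator of zero index sum in orbit elements. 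In the first case, using the freezing Lemma~\ref{zamorazhivanie} repeatedly one rewrites the innermost zero-sum segment in representatives of level $0$, then the next in level-$1$ representatives, and so on up the $T$ nested segments; after $T$ freezings one obtains a commutator that, reading from the outside, consists of a single centralizer $z_{i}\in L_i(T)$ followed by $T$ quasirepresentatives of levels $0,1,\ldots,T-1$ with zero total index sum, hence vanishes by Lemma~\ref{quasirepresentatives} (after checking the weight is $\leq U+1$, which is why $U$ was chosen as $V(T,T-1,q,c)$). In the second case one freezes each $c_i$ down to level $i-1$, turning $[u_{k_0},c_1,\ldots,c_{T-1}]$ into a commutator in one level-$T$ centralizer (namely $u_{k_0}$, or rather $z_i$ once collected) and quasirepresentatives of the distinct levels $0,1,\ldots,T-2$, again with zero index sum, so it is zero by Lemma~\ref{quasirepresentatives}. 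Thus $\gamma_{V+1}(Z)$ kills every homogeneous generator-commutator; since by the remarks in \S\,2 the space $\gamma_{V+1}(Z)$ is spanned by such commutators, $Z$ is nilpotent of class $\leq V=V(T,T-1,q,c)$, which is $(q,c)$-bounded because $T=f(q,c)+1$.

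\textbf{The main obstacle} is the bookkeeping in Step 2: one must verify that when the freezing procedure replaces zero-sum subcommutators by representatives level by level, the result really does acquire the structure ``one level-$T$ centralizer followed by quasirepresentatives of strictly lower, and ideally distinct, levels'' needed to invoke Lemma~\ref{quasirepresentatives}, and that all intermediate commutators stay within weight $U+1$ so that the centralizer property~\eqref{centalizer-property} and Lemma~\ref{quasirepresentatives} are applicable --- this is precisely what forces the choice $U=V(T,T-1,q,c)$ and $t_1=T$, $t_2=T-1$. A secondary point is that the $h$-orbits introduced by the MKhSh-transformation do not leave the generating set, which is guaranteed by Lemma~\ref{invariance}. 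Finally, one reduces the general (non-graded, positive-characteristic) case of Theorem~\ref{t-lie-algebra} to this graded situation by passing to $\widetilde L$ over the extended field and using that the characteristic does not divide $q$; but the present proposition is exactly the graded core, so that reduction is deferred.
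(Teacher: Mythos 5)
Your plan---take $Z=\langle L_i(T)\mid i\ne 0\rangle$, bound its codimension, and reduce nilpotency to the vanishing of a weight-$U=V(T,T-1,q,c)$ commutator via Proposition~\ref{kh-ma-shu-transformation}---is exactly the paper's, and Step 1 is correct. But Step~2 has real gaps, and in the (f2) case your argument breaks.

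First, the freezing must run from the \emph{outside in}, not the inside out. Lemma~\ref{zamorazhivanie} turns a zero-sum commutator in centralizers of levels $k_1,\dots,k_w$ into one in representatives of a level $s\le\min\{k_i\}$. If you start by freezing the innermost segment at level $0$, the result lives in $L_0$-combinations of level-$0$ representatives, and level-$0$ representatives are \emph{not} centralizers of level $\ge 1$; so the next outer segment is no longer a commutator in centralizers and the lemma cannot be applied again. The paper freezes the whole commutator at level $T$ first, then the next-inner nested segment at level $T-1$, and so on, which works precisely because $L_j(T)\subseteq L_j(T-1)\subseteq\cdots$. Second, you skip the collecting process altogether: after freezing you do not get ``one centralizer followed by quasirepresentatives of distinct levels''---you get a long string $[x(1),\dots,x(1),x(2),\dots,x(T)]$ (or with $u_{k_0}(T)$ in front) of representatives of various levels repeated, and a Jacobi-identity collecting pass is needed to produce the weight-$T$ commutator $[\hat x_{k_1}(1),\dots,\hat x_{k_T}(T)]$ (resp.\ $[y_{k_0}(T),\hat x_{k_1}(1),\dots,\hat x_{k_{T-1}}(T-1)]$).

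Most importantly, you then assert that the collected commutator ``has zero total index sum'' and dies by Lemma~\ref{quasirepresentatives}. In case~(f2) this is simply false: $[u_{k_0},c_1,\dots,c_{T-1}]$ has index sum $k_0\not\equiv 0\pmod n$ since each $c_i\in L_0$ and $k_0\ne 0$, and the collecting process preserves the total index. So Lemma~\ref{quasirepresentatives} cannot be applied directly. The paper handles this by applying Theorem~\ref{combinatorial} a \emph{second} time, to the collected weight-$T$ commutator (note $T=f(q,c)+1$ is exactly the right weight for this), producing a linear combination of weight-$T$ commutators with a zero-sum subcommutator; then the highest-level element inside that subcommutator is moved to the front and Lemma~\ref{quasirepresentatives}, via the centralizer property~\eqref{centalizer-property}, kills it. This second combinatorial pass is the missing step in your argument and is indispensable for~(f2); your write-up also swaps the structural descriptions of cases~(f1) and~(f2), which suggests the two cases got conflated.
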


\begin{proof}
 Recall that
$T$ is the fixed notation for the highest level, which is a $(q,c)$-bounded number.

 In \S\,4 we constructed the generalized centralizer $L_j(T)$. We set
$$
Z=\left< L_1(T),\,L_2(T),\ldots ,L_{n-1}(T)\right>.
$$

For every $k =0,1,\dots ,n-1 $ we denote the subspace $Z\cap
L_k$ by $Z_k$. Clearly,
$$
 Z=\bigoplus\limits_{k=0}^{n-1}Z_{k},
$$
and, in particular, $Z$ is generated by the subspaces~$Z_{k}$.
Furthermore, the subalgebra $Z$ is $H$-invariant by Lemma~\ref{invariance}
 and $(Z_k)^h=Z_{rh}$, since $(L_i)^h=L_{ri}$, $i\ne 0$.

Every subspace $L_j(T)$ has $(m,n,q,c)$-bounded
codimension in $L_j$, while the dimension of $L_0$ is equal to $m$
by hypothesis. Since $L=\bigoplus_{i=0}^{n-1} L_i$ and the subalgebra $Z$
is generated by the subspaces $L_j(T)$, $j\ne 0$, it follows that $Z$ has
$(m,n,q,c)$-bounded codimension in $L$. We  claim that
the subalgebra $Z$ is in addition nilpotent of $(c,q)$-bounded class
and therefore is a required one.

Let $U=V(T, T-1, q,c )$, where $V$ is the function in the conclusion of
Proposition~\ref{kh-ma-shu-transformation}. It is sufficient to prove that every simple commutator of weight $U$ of the form
\begin{equation}\label{9}[y_{i_1}(T),\ldots, y_{i_U}(T)],
\end{equation}
where $y_{i_j}(T)\in L_{i_j}(T)$,
 is equal to zero.  Let $X$
be the union of the $h$-orbits of  the elements $y_{i_1}(T),\ldots, y_{i_U}(T)$, that is,
$$X=\bigcup_{j=1}^U O(y_{i_j}(T)),
$$
where, recall,
$$
O(y_{i_j}(T))=\{y_{i_j}(T),\,\,\,y_{i_j}(T)^h=y_{ri_j}(T),\,\,\,\ldots\,\,\,
y_{i_j}(T)^{h^{q-1}}=y_{r^{q-1}i_j}(T)\,\}.
$$
By Proposition~\ref{kh-ma-shu-transformation}, the commutator~\eqref{9}
can be represented as a linear combination of $\varphi$-homogeneous
commutators in elements belonging   to the set $X$ each of which either has a
subcommutator of the form~\eqref{f1} %!!
in which there are $T$ distinct initial segments in $L_0$, or has a
subcommutator of the form~\eqref{f2} in which there are $T-1$ occurrences of
elements from $L_0$.  It is sufficient to prove that
the commutators~\eqref{f1} and \eqref{f2} are equal to zero.

We firstly consider the commutator
\begin{equation}\label{main-f-2}
[u_{k_0}, c_1,\ldots, c_{T-1}], %!!????????? sogl. indexov??????
\end{equation}
where $u_{k_0}\in X$, every $c_i\in L_0$ with numbering indices $i=1,\ldots,T-1$ %!!!!
has the form
$$[x_{k_1},\ldots, x_{k_i}],
$$
where $x_{k_j}\in X$ and $k_1+\cdots+k_i\equiv 0 \; ({\rm mod}\, n)$.

Using Lemma~\ref{zamorazhivanie} we  ``freeze'' every element $c_k$, where
$k=1,\ldots,T-1$,  as a linear combination of commutators of the same pattern of weight $<U$ in representatives of level $k$.  Expanding then the inner brackets by the Jacobi identity
$[a,[b,c]]=[a,b,c]-[a,c,b]$ we represent
the commutator~\eqref{main-f-2} as a linear combination
of commutators of the form
\begin{equation}\label{main-f-22}[u_{k_0}(T),\,\,x_{j_1}(1),\ldots, x_{j_k}(1),\,x_{j_{k+1}}(2),\ldots,
x_{j_s}(2), \ldots, \, \, \, \,x_{j_{l+1}}(T-1),\ldots,
x_{j_{u}}(T-1)\,].
\end{equation}
We subject the commutator~\eqref{main-f-22} to a certain collecting process.
Our aim is a representation of the commutator as a
linear combination of commutators with initial segments
consisting of
representatives of different levels $1,2,\ldots,T-1 $ and
the element $u_{k_0}(T)$. For that, by the formula
$[a,b,c]=[a,c,b]+[a,[b,c]]$,   in the commutator~\eqref{main-f-22}
we begin moving the element $x_{j_{k+1}}(2)$ (the  first from the left element
of level 2) to the left, aiming at placing it right after the element
$x_{j_{1}}(1)$.   In the course of these transformations there will appear
additional summands of special form. At first step, say, we obtain a sum
$$[u_{k_0}(T),\,\,\ldots,x_{j_{k+1}}(2),
x_{j_k}(1),\,\ldots,\,]+[u_{k_0}(T),\,\,\ldots,
[x_{j_k}(1),\,x_{j_{k+1}}(2)],\ldots].
$$
In the first summand
we continue transferring the element $x_{j_{k+1}}(2)$ to the left, over all
representatives of level 1. In the second summand we replace the subcommutator
$[x_{j_k}(1),\,x_{j_{k+1}}(2)]$ by the quasirepresentative
$\hat{x}(2)=\hat{x}_{j_k+j_{k+1}}(2)$ %!! zap. ubrat'
and move already this quasirepresentative to the left over all representatives of  level 1.
Since we transfer a quasirepresentative of level $2$ over
representatives of level $1$, in additional summands every time there appear subcommutators that
are quasirepresentatives of level $2$, which assume the role of the element that is being transferred.
As a result we obtain a linear combination of commutators  of the form
$$[[u_{k_0}(T),x(1),\hat{x}(2)], x(1),\ldots,x(1),\,\,x(2),\ldots,
x(2),\ldots, x(T-1),\ldots,x(T-1)]
$$
with collected initial
segment $[u_{k_0}(T),x(1),\hat{x}(2)]$. (For simplicity we omitted
indices in the formula.)  Next we begin moving to the beginning
the first from the left representative of level 3 aiming at placing it in the fourth place. It is important that this element is also transferred only
over representatives of  lower level, and
subcommutators in additional summands are quasirepresentatives of level 3. Replacing these
subcommutators by  quasirepresentatives of level 3,  we keep moving them to the left and so on.
At the end of this process we obtain
a linear combination of commutator with initial segment of the form
\begin{equation}
\label{main-f-23}[y_{k_0}(T),\, \hat{x}_{k_1}(1),
\hat{x}_{k_2}(2),\ldots,\hat{x}_{k_{T-1}}(T-1) ].
\end{equation}
By Theorem~\ref{combinatorial} the commutator~\eqref{main-f-23} of weight
$T$ is equal to a linear combination of $\varphi$-homogeneous commutators
of the same weight $T$ %!!
in elements of the $h$-orbits of the  elements
$y_{k_0}(T),\, \hat{x}_{k_1}(1),
\hat{x}_{k_2}(2),\ldots,\hat{x}_{k_{T-1}}(T-1)$ that %!!!!
have
subcommutators with zero sum of indices modulo $n$. By
Lemma~\ref{invariance} every element $(\hat{x}_
{k_i}(i))^{h^{l}}$ is  a quasirepresentative
of the form $\hat{x}_{r^lk_i}(i)$ of level $i$ and any $(y_{k_0}(T))^{h^l}$
is a centralizer of the form $y_{r^lk_0}(T)$ of level $T$. Since
every level appears only once in \eqref{main-f-23} and
there is an initial segment with zero sum of indices, every
commutator of this linear combination is equal to 0 by
Lemma~\ref{quasirepresentatives}.

We now show that a commutator of the form
\begin{equation}\label{main-f-1}
[y_{k_1}(T),\ldots, y_{k_s}(T)],
\end{equation}
where $y_{k_j}\in X$ and there are $T$ distinct initial segments with
zero sum of indices modulo $n$:
$$
k_1+k_2+\cdots+k_{r_i}\equiv 0\,(\rm{mod}\, n),\,\,\, i=1,2,\ldots,T,
$$
$$
1<r_1<r_2<\cdots<r_{T}=s,
$$
is equal to zero. The commutator~\eqref{main-f-1} belongs to $L_0$ and is
a commutator in elements of centralizers $L_i(T)$ of level $T$; therefore by
Lemma~\ref{zamorazhivanie} it can be ``frozen'' in
level $T$, that is,  represented as a linear combination
of commutators of the same pattern of weight $\leq U$ in
representatives of level $T$:
\begin{equation}\label{main-f-11}
[x_{k_1}(T),\ldots, x_{k_s}(T)].
\end{equation}
Next, the initial segment of the commutator~\eqref{main-f-11} of length
$r_{T-1}$ also belongs to $L_0$ and is a commutator in elements of
the centralizers $L_i(T-1)$ of level $T-1$, since
$L_i(T-1)\leq L_i(T)$; therefore by Lemma~\ref{zamorazhivanie} it can be
``frozen'' in level $T-1$, that is,  represented in the form
of a linear combination of commutators of the pattern of weight $\leq
U$ in representatives of level $T-1$, and so on. As a result we obtain
a linear combination of commutator of the form
\begin{equation}\label{main-f-12}
[x(1),\ldots, x(1),x(2),\ldots, x(2),\ldots, \ldots, x(T),\ldots,
x(T)].
\end{equation}
(We omitted here indices for
simplicity.) We subject the commutator~\eqref{main-f-12} to exactly the same
transformations as the commutator~\eqref{main-f-22}. First we transfer the left-most element
of level $2$ to the left to the second place,
then the left-most element of level 3 to the third place, and so on. In
additional summands the emerging quasirepresentatives
$\hat{x}(i)$ assume the role of the element being transferred and are also
transferred to the left to the $i$th place. In the end we obtain a linear
combination of commutators of the form
\begin{equation}\label{main-f-13}[\hat{x}_{k_1}(1),
\hat{x}_{k_2}(2),\ldots,\hat{x}_{k_{T}}(T) ].
\end{equation}
By Theorem~\ref{combinatorial} the commutator~\eqref{main-f-13} of weight
$T$ is equal to a linear combination of $\varphi$-homogeneous commutators
of the same weight $T$ %!!zapiat.
in  elements of the $h$-orbits of the elements
$\hat{x}_{k_1}(1), \hat{x}_{k_2}(2),\ldots,\hat{x}_{k_{T}}(T)$
that have subcommutators with zero sum of indices modulo $n$.
By Lemma~\ref{invariance} every element $(\hat{x}_
{k_i}(i))^{h^{l}}$ is a quasirepresentative of the form
$\hat{x}_{r^lk_i}(i)$ of level $i$.  Let $\hat{x}_ {k_s}(s)$
be a quasirepresentative of maximal level $s$ occurring in
the initial segment with zero sum of indices. By representing
the commutator as a linear combination of simple commutators beginning
with $\hat{x}_ {k_s}(s)$ and with the same set of elements occurring in it, we obtain 0 by
Lemma~\ref{quasirepresentatives}.
\end{proof}

We now complete the proof of Theorem~\ref{t-lie-algebra}. We  shall need the following lemma.

\begin{lemma}\label{p-p}
Let $p$ be a prime number and let
$\psi $ be a linear transformation of finite order~$p^k$
of a vector space~$V$ over a field of characteristic~$p$ the space of fixed points of which
has finite
dimension~$m$. Then the dimension
 of $V$ is finite and does not exceed~$mp^k$. \end{lemma}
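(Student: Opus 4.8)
The plan is to reduce to the case where $\psi$ has prime order~$p$ and then iterate. First I would observe that $\psi^{p^{k-1}}$ is again a linear transformation of $V$, and it has order dividing~$p$; its fixed-point space contains $C_V(\psi)$, but a priori could be much larger, so the naive induction on $k$ via $\psi^{p^{k-1}}$ does not immediately work. Instead I would use the standard filtration argument: set $V_0 = V$ and, having defined $V_i$, let $V_{i+1} = (\psi - 1)V_i$. Since the ground field has characteristic~$p$ and $\psi^{p^k} = 1$, we have $(\psi - 1)^{p^k} = \psi^{p^k} - 1 = 0$ as an operator on $V$, so the chain $V = V_0 \supseteq V_1 \supseteq V_2 \supseteq \cdots$ terminates: $V_N = 0$ for $N = p^k$.

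Next I would bound each quotient $V_i / V_{i+1}$. By construction $(\psi - 1)$ maps $V_i$ onto $V_{i+1}$ with kernel $V_i \cap C_V(\psi)$, so the dimension of $V_i/V_{i+1}$ equals $\dim(V_i) - \dim(V_{i+1})$, and more importantly $V_i \cap \ker(\psi - 1) = V_i \cap C_V(\psi)$ embeds into $C_V(\psi)$, hence has dimension at most~$m$. Wait — that gives $\dim V_i - \dim V_{i+1} \le m$ only if $V_i$ is finite-dimensional, which is what we are trying to prove, so I must argue more carefully: the map $(\psi-1)\colon V_i \to V_{i+1}$ has kernel of dimension at most~$m$, so if $V_{i+1}$ is finite-dimensional then so is $V_i$, with $\dim V_i \le \dim V_{i+1} + m$. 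Since $V_{p^k} = 0$ is finite-dimensional, descending induction on~$i$ gives that every $V_i$ is finite-dimensional and $\dim V_i \le (p^k - i)m$; in particular $\dim V = \dim V_0 \le p^k m$.

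The main obstacle is precisely the finiteness issue just flagged: one cannot speak of codimensions of the $V_i$ inside $V$ until one knows $V$ is finite-dimensional, so the argument must run the induction \emph{downward} from the top of the filtration (where finite-dimensionality is automatic because $V_{p^k}=0$) rather than upward from $V_0 = V$. Once that is set up correctly, the bound $\dim V \le m p^k$ drops out, and the only inputs used are $\operatorname{char} \F = p$, the order of $\psi$ being a $p$-power, and $\dim C_V(\psi) = m$. No appeal to the earlier machinery of the paper is needed for this lemma.
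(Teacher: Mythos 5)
Your proof is correct. The paper does not actually spell out an argument; it simply notes the lemma is well known and refers to Jordan form, citing~\cite[1.7.4]{kh4}. Your filtration $V_i=(\psi-1)^iV$ is the elementary mechanism underlying the Jordan-block count: in characteristic~$p$ one has $(\psi-1)^{p^k}=\psi^{p^k}-1=0$, each map $\psi-1\colon V_i\to V_{i+1}$ is onto with kernel contained in $C_V(\psi)$, and your descending induction from $V_{p^k}=0$ correctly handles the subtlety that one cannot talk about codimensions in~$V$ before finiteness is established --- a point a naive invocation of ``Jordan form'' would have to address anyway, e.g.\ by first passing to the finite-dimensional $\psi$-invariant subspace generated by an arbitrary finite set. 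Your version is self-contained and avoids normal-form machinery entirely; the Jordan-form phrasing buys nothing beyond brevity and familiarity. In short: same underlying idea, but your write-up is the complete argument where the paper only gestures at one.
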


\begin{proof}
This is a well-known fact, the proof of which is based on considering the Jordan form of the transformation $\psi$; see,
for example, \cite[1.7.4]{kh4}.
\end{proof}

First suppose that the characteristic of the field $\Bbb F$ is equal to a prime divisor $p$ of the
number $n$. Let $ \langle \psi \rangle$ be the Sylow $p\hs$-subgroup of the group $\langle \varphi\rangle$, and let
$\langle \varphi\rangle =\langle \psi\rangle \times \langle
\chi\rangle$, where the order of $\chi$ is not divisible  by $p$. Consider the
subalgebra of fixed points $A=C_L(\chi )$. It is
$\psi$-invariant and $ C_A(\psi )\subseteq C_L(\varphi )$. Therefore,
${\rm dim\,}C_A(\psi )\leq m$, and by  Lemma~\ref{p-p},
the dimension ${\rm dim\,} A={\rm dim\,}C_L(\chi )$ is bounded
by some $(m,n)$-bounded number $u(m,n)$. Furthermore, $\chi$
is a semisimple automorphism of the Lie algebra %!!
$L$ of order $\leq n$. Thus, $L$ admits the Frobenius group of automorphisms
$\langle\chi\rangle H$ and ${\rm dim\,}C_L(\chi )\leq u(m,n)$.
Replacing $F$ by $\langle \chi\rangle$ we can assume that
$p$ does not divide $n$.

Let $\omega$ be a primitive $n$th root of unity. We extend the
ground field by $\omega$ and denote by $\widetilde L$ the algebra
over the extended field. The group $FH$ naturally acts on
$\widetilde L$, and the subalgebra of fixed points $C_{\widetilde
L}(H)$
is nilpotent of the same %!!
 class~$c$, while the subalgebra
of fixed points $C_{\widetilde L}(F)$ has the same
dimension $m$. Since the characteristic of the field does not divide $n$, we have
$$\widetilde L= L_0 \oplus L_1\oplus \dots \oplus L_{n-1},
$$
where
$$L_k=\left\{ a\in \widetilde L\mid  a^{\varphi}=\omega
^{k}a\right\},$$
and this decomposition is a $({\Bbb Z}/n{\Bbb
Z})$-grading, since
$$[L_s,\, L_t]\subseteq L_{s+t\,({\rm mod}\,n)},
$$
where $s+t$ is calculated modulo $n$.

By Proposition~\ref{main-proposition} the algebra $\widetilde L$
has a nilpotent subalgebra $Z$ of
$(m,n,q,c)$-bounded codimension and of $(q,c)$-bounded
nilpotency class. Obviously, the subalgebra  $L\cap Z$
is the sought-for subalgebra of $(m,n,q,c)$-bounded codimension and of $(q,c)$-bounded
nilpotency class in $L$. The theorem is
proved.

\section{Locally nilpotent torsion-free groups}

Every locally nilpotent torsion-free group $G$ can be embedded
into a divisible group $\sqrt{G}$ consisting of of all roots of non-trivial %!!
element of $G$, the so-called Mal'cev completion of $G$
(see, for example, \cite[Ch.~10]{kh5}). Every automorphism of the group $G$
can be uniquely extended to an automorphism of the group
$\sqrt{G}$. Divisible torsion-free groups can be regarded as
${\Bbb Q}\hs$-groups with additional operations of extracting rational roots.
The Mal'cev correspondence given by the Baker--Hausdorff
formula and its inversions establishes an equivalence of the category of locally nilpotent $\Q\hs$-groups and the category of locally nilpotent Lie $\Q\hs$-algebras %!!
 (see,
for example,~\cite[Ch.~10]{kh5}.)  We can assume that the corresponding
objects in these two categories have the same underlying  set. Let $G$ and $L$ be category equivalent
a $\Q\hs$-group and a Lie $\Q\hs$-algebra, respectively,  with the same underlying set.
Then $\Q\hs$-subgroups of  $G$ (that is,
divisible subgroups) are (as subsets) $\Q\hs$-subalgebras of the algebra $L$ and vice versa;
normal $\Q\hs$-subgroups of  $G$ are precisely ideals of $L$, and so on. The nilpotency class of a subgroup %!!
of  $G$
coincides with its nilpotency class as a Lie subalgebra %!!
of $L$.

Recall that a group has finite rank $r$ if every
finitely  generated subgroup of it is generated by $r$ elements (and
$r$ the smallest number with this  property). By Mal'cev's theorem
\cite[Theorem~5]{ml} a locally nilpotent torsion-free group $G$ has finite rank if and only if it is
nilpotent and has finite sectional rank. We shall need
the following version of Mal'cev's theorem proved in \cite{khmk4}.

\begin{lemma}[{\cite{khmk4}, Lemma 9}] \label{rk} If a locally nilpotent torsion-free group
$ C$ has finite rank $r$, then the Lie $\Q\hs$-algebra $U$ that is equivalent to $\sqrt{C}$
under the Mal'cev category correspondence has finite
$r$-bounded dimension.
\end{lemma}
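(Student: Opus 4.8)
The plan is to identify $\dim_\Q U$ with the Hirsch length of $\sqrt C$ and then bound that Hirsch length in terms of $r$. In the divisible nilpotent group $\sqrt C$ (nilpotent by Mal'cev's theorem \cite{ml}) all terms $\gamma_i(\sqrt C)$ of the lower central series are themselves divisible, hence are $\Q$-subgroups, and under the Mal'cev category correspondence they correspond to the ideals $\gamma_i(U)$ of $U$. Therefore each factor $\gamma_i(U)/\gamma_{i+1}(U)$ is a $\Q$-vector space whose dimension equals the torsion-free rank $\rho_i$ of $\gamma_i(C)/\gamma_{i+1}(C)$ (this rank is unchanged on passing to $\sqrt C$), and since the lower central series of $U$ is finite, $\dim_\Q U=\sum_i\rho_i$. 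Each $\rho_i\leq r$, because a torsion-free abelian group of rank $\rho_i$ contains a free abelian subgroup of rank $\rho_i$, which needs $\rho_i$ generators, while every section of $C$ again has rank at most $r$. So it remains only to bound the nilpotency class $c$ of $C$ (the number of nonzero summands $\rho_i$) by an $r$-bounded number.

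The only real obstacle is the bound $c\leq 2r+1$, which I would obtain by exhibiting a free abelian subgroup of $C$ of rank $\lfloor c/2\rfloor$. Since $\gamma_c(C)\neq1$, choose $x,y_2,\dots,y_c\in C$ with $[x,y_2,\dots,y_c]\neq1$ and put $u_1=x$ and $u_k=[u_{k-1},y_k]\in\gamma_k(C)$ for $k\geq2$, so $u_c\neq1$. First I would show that $u_j\notin\sqrt{\gamma_{j+1}(C)}$ for every $j\leq c$: indeed, if $u_j^N\in\gamma_{j+1}(C)$ for some $N$, then, using the identity $[a^N,b]\equiv[a,b]^N$ modulo $\gamma_3(\langle a,b\rangle)$ together with $\gamma_3(\langle u_j,y_{j+1}\rangle)\subseteq\gamma_{j+2}(C)$, one gets $u_{j+1}^N\in\gamma_{j+2}(C)$; iterating this would give $u_c\in\sqrt{\gamma_{c+1}(C)}=1$, a contradiction. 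Now set $k=\lceil c/2\rceil+1$. For $i,j\geq k$ we have $[u_i,u_j]\in\gamma_{i+j}(C)\subseteq\gamma_{c+2}(C)=1$, so $A=\langle u_k,\dots,u_c\rangle$ is abelian; and if $\prod_{j=k}^{c}u_j^{a_j}=1$ with $j_0$ the least index for which $a_{j_0}\neq0$, then $u_{j_0}^{a_{j_0}}\in\gamma_{j_0+1}(C)$, whence $u_{j_0}\in\sqrt{\gamma_{j_0+1}(C)}$, contradicting the previous step. Thus $A$ is free abelian of rank $c-k+1=\lfloor c/2\rfloor$, so $\lfloor c/2\rfloor\leq r$ and $c\leq 2r+1$.

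Putting the pieces together, $\dim_\Q U=\sum_{i=1}^{c}\rho_i\leq cr\leq(2r+1)r$, which is $r$-bounded, as required. (Alternatively one could quote a quantitative form of Mal'cev's theorem from \cite{ml} to bound $c$ and the $\rho_i$ simultaneously, but the argument above is self-contained.) The point on which everything turns is the isolator estimate $u_j\notin\sqrt{\gamma_{j+1}(C)}$, which relies on the commutator collection identity for $[a^N,b]$ and the weight bookkeeping $\gamma_3(\langle u_j,y_{j+1}\rangle)\subseteq\gamma_{j+2}(C)$; the remaining steps are routine manipulation of the lower central series and the dictionary provided by the Mal'cev correspondence.
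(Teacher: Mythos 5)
Your argument is correct, but it is worth noting that the paper itself does \emph{not} prove this lemma---it is stated verbatim with a citation to \cite{khmk4}, Lemma~9, and the burden is discharged by reference. So there is no internal proof to compare against; what you have produced is a self-contained reproof of the cited result.

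As for the substance: the reduction $\dim_{\mathbb Q}U=\sum_i\rho_i$ via the Mal'cev dictionary and the correspondence $\gamma_i(\sqrt C)\leftrightarrow\gamma_i(U)$ is sound (the $\gamma_i(\sqrt C)$ are indeed divisible, being verbal in a divisible nilpotent group), and the bound $\rho_i\le r$ is immediate from the definition of rank since each $\gamma_i(U)/\gamma_{i+1}(U)$ is the $\mathbb Q$-hull of a section of $C$. The real content is your class bound $c\le 2r+1$, and the two pieces of that argument both check out: the isolator claim $u_j\notin\sqrt{\gamma_{j+1}(C)}$ follows by pushing a putative relation $u_j^N\in\gamma_{j+1}(C)$ down to $u_c^N\in\gamma_{c+1}(C)=1$ using $[a^N,b]\equiv[a,b]^N\ (\mathrm{mod}\ \gamma_3(\langle a,b\rangle))$ and the weight estimate $\gamma_3(\langle u_j,y_{j+1}\rangle)\subseteq\gamma_{j+2}(C)$ (which holds since $[u_j,y,u_j]\in\gamma_{2j+1}\subseteq\gamma_{j+2}$ and $[u_j,y,y]\in\gamma_{j+2}$); and the choice $k=\lceil c/2\rceil+1$ makes $i+j\ge c+2$ so that $\langle u_k,\dots,u_c\rangle$ is abelian, while the isolator property forces it to be free abelian of rank $\lfloor c/2\rfloor$, giving $\lfloor c/2\rfloor\le r$. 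One small stylistic point: the phrase ``Hirsch length of $\sqrt C$'' is a slight abuse, since $\sqrt C$ is divisible and hence not polycyclic; what you mean (and what you actually use) is $\sum_i\dim_{\mathbb Q}\gamma_i(U)/\gamma_{i+1}(U)$, which is fine. The final bound $\dim_{\mathbb Q}U\le(2r+1)r$ is an explicit $r$-bound, so the lemma is established.
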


\begin{theorem}\label{t-groups1} Let $FH$ be a Frobenius
group with cyclic kernel $F$ of order $n$ and with complement $H$
of order $q$. If $FH$ acts by automorphisms on a locally
nilpotent torsion-free group
$G$ in such a way that the subgroup of fixed points $C_G(F)$ has
finite rank $r$ and the subgroup of fixed points $C_G(H)$
is nilpotent of class $c$, then $G$ has a nilpotent subgroup
$T$ of nilpotency class bounded by some
function depending only on $q$ and $c$ such that $T$ has
finite ``corank'' $t=t(r,n,q,c)$ in $G$  bounded above in terms of
 $r$, $n$, $q$, $c$ in the sense that there are $t$ element $g_1,\ldots ,g_t$
 such that every element of $G$ is a root of an element of the subgroup
$ \left< g_1,\ldots ,g_t,\,T\right>$.
\end{theorem}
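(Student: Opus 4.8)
The plan is to transfer Theorem~\ref{t-lie-algebra} from Lie algebras to locally nilpotent torsion-free groups via the Mal'cev correspondence, exactly as in the analogous passages of \cite{khmk4}. First I would pass to the Mal'cev completion: replace $G$ by $\sqrt{G}$, on which $FH$ still acts by automorphisms (each automorphism extends uniquely), and note that $C_{\sqrt{G}}(H)$ is still nilpotent of class $c$ while $C_{\sqrt{G}}(F)$ is the Mal'cev completion of $C_G(F)$ and hence again has finite rank $r$. Applying the Mal'cev category equivalence, let $L$ be the Lie $\Q$-algebra with the same underlying set as $\sqrt{G}$; then $FH$ acts on $L$ by automorphisms, $C_L(H)=C_{\sqrt{G}}(H)$ is nilpotent of class $c$, and $C_L(F)=C_{\sqrt{G}}(F)$ is the Lie $\Q$-algebra equivalent to $\sqrt{C_G(F)}$. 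By Lemma~\ref{rk} the latter has finite $r$-bounded dimension, say $\dim C_L(F)=m\le m(r)$. The characteristic is $0$, so it certainly does not divide $q$.

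Next I would apply Theorem~\ref{t-lie-algebra} to $L$: it yields a nilpotent subalgebra $Z\le L$ of $(m,n,q,c)$-bounded codimension, hence of $(r,n,q,c)$-bounded codimension, whose nilpotency class is $(q,c)$-bounded. I would replace $Z$ by the $\Q$-subalgebra it generates (a $\Q$-subalgebra has the same nilpotency class and at most the same, hence still bounded, codimension over $\Q$); call this $\Q$-subalgebra $W$. Under the category correspondence $W$ corresponds to a divisible ($\Q$-)subgroup $S$ of $\sqrt{G}$ which, as a subgroup, is nilpotent of the same $(q,c)$-bounded class. Pick a $\Q$-basis $a_1,\dots,a_t$ of a complement to $W$ in $L$ as $\Q$-vector spaces, where $t=\operatorname{codim}_\Q W$ is $(r,n,q,c)$-bounded; these are the would-be elements $g_1,\dots,g_t$.

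Finally I would take $T:=S\cap G$, a subgroup of $G$ which is nilpotent of $(q,c)$-bounded class. To verify the ``corank'' statement, let $g\in G\subseteq \sqrt{G}=L$. Writing $g$ in the $\Q$-space $L=W\oplus \langle a_1,\dots,a_t\rangle_\Q$ and clearing denominators, there is a positive integer $N$ with $Ng\in W+\Z a_1+\cdots+\Z a_t$ as elements of the additive structure of $L$; translating back through the Baker--Hausdorff correspondence, a suitable power $g^N$ (or an $N$-th multiple in the completion) lies in the subgroup $\langle a_1,\dots,a_t, S\rangle$ of $\sqrt G$, so $g$ is a root of an element of $\langle a_1,\dots,a_t, S\rangle$; intersecting with $G$ and absorbing the $a_i$ appropriately gives that every element of $G$ is a root of an element of $\langle g_1,\dots,g_t, T\rangle$ for suitable $g_1,\dots,g_t\in G$. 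The main obstacle I expect is bookkeeping the passage between the additive ($\Q$-linear) structure of $L$ and the multiplicative structure of $\sqrt G$ cleanly enough to produce honest \emph{group} elements $g_1,\dots,g_t\in G$ and a genuine subgroup $\langle g_1,\dots,g_t,T\rangle$ whose roots exhaust $G$ --- essentially checking that ``$\Q$-bounded codimension of a $\Q$-subalgebra'' really does translate into ``$(r,n,q,c)$-bounded corank of the corresponding subgroup'' in the stated root-theoretic sense; this is routine given Lemma~\ref{rk} and the dictionary recalled at the start of \S\,7, but it is where all the care is needed.
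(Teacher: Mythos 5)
Your proposal follows essentially the same route as the paper: pass to the Mal'cev completion $\sqrt{G}$ and the corresponding Lie $\Q$-algebra $L$, invoke Lemma~\ref{rk} to get $\dim C_L(F)$ $r$-bounded, apply Theorem~\ref{t-lie-algebra} to obtain a nilpotent subalgebra $Z$ of bounded codimension and class, and set $T=Z\cap G$. The paper's proof is terser and does not spell out the corank verification at all; your extra steps are sound in spirit (though note that $Z$ is already a subalgebra of $L$ over $\Q$, so forming ``the $\Q$-subalgebra generated by $Z$'' is redundant, and the cleanest route to the corank statement is to observe that the $\Q$-subgroup generated by $g_1,\dots,g_t$ and the divisible hull of $T$ corresponds under Mal'cev to the $\Q$-subalgebra generated by $Z$ and $t$ complementary vectors, which is all of $L$, rather than arguing element-by-element through Baker--Hausdorff).
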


\begin{proof}
 We denote by the same letters $F$ and $H$ the extensions of the groups of  automorphisms
 to the group $\sqrt{G}$. Let $L$ be the Lie $\Q\hs$-algebra with the same underlying set $\sqrt{G}$
constructed by the Mal'cev correspondence. The automorphisms of the Lie algebra $L$ are the automorphisms of the
group $\sqrt{G}$ acting on the same set in exactly the same way.  Since
 $$
\sqrt{C_G(H )}=C_{\sqrt{G}}(H)=C_L(H),\,\,\,\,\, \sqrt{C_G(F)}=C_{\sqrt{G}}(F)=C_L(F),
$$
the subalgebra $C_L(H)$ is nilpotent of class $c$ and the subalgebra
$C_L(F)$ has $r$-bounded dimension by Lemma~\ref{rk}. By
Theorem~\ref{t-lie-algebra} the algebra $L$ has a nilpotent subalgebra
$Z$ of  $(c,q)$-bounded nilpotency class and of
$(r,n,q,c)$-bounded codimension. The intersection $Z\cap
G$ is a sought-for subgroup of $G$.\end{proof}


\begin{thebibliography}{11}



\bibitem{kour}
\emph{Unsolved Problems in Group Theory: The Kourovka Notebook}, 17th ed., Sobolev Institute of Mathematics,
Novosibirsk (2010).

\bibitem{khu08} E.\, I.~Khukhro, Graded Lie rings with many commuting
components and an application to $2$-Frobenius groups,
\emph{Bull. London Math. Soc.} {\bf 40}, (2008), 907--912.


\bibitem{mak-shu10} N. Y. Makarenko, P. Shumyatsky, Frobenius
groups as groups of automorphisms, \emph{Proc. Amer. Math. Soc.}
{\bf 138} (2010), 3425--3436.

\bibitem{khu10al} Khukhro E. I., The nilpotent length of a finite group admitting a Frobenius group of automorphisms with a fixed-point-free kernel, \emph{Algebra Logic} {\bf 49} (2011), 551--560.

\bibitem{khu-ma-shu} E.\,I.~Khukhro, N. Y. Makarenko,  P. Shumyatsky,
Frobenius groups of automorphisms and their fixed points,
\emph{Forum Math.}, 2011; DOI: \verb#10.1515/FORM.2011.152#;\\
\verb#arxiv.org/abs/1010.0343#.

\bibitem{khu-ma-shu-DAN} Makarenko N. Yu., Khukhro E. I., Shumyatsky P.,  Fixed points of Frobenius groups of automorphisms, Dokl. Math. {\bf 83}, no. 2, 152--154  (2011).


\bibitem{shu-a4} P. Shumyatsky, On the exponent of a finite group with an
automorphism group of order twelve, \emph{J. Algebra} {\bf 331}
(2011), 482--489.


\bibitem{shu-law} P. Shumyatsky, Positive laws in fixed points of
automorphisms of finite groups, \emph{J. Pure Appl. Algebra} {\bf
215} (2011), 2550--2566.

\bibitem{khu12ja}  E. I. Khukhro, Fitting height of a finite group with a Frobenius group of automorphisms,
\emph{J.~Algebra} {\bf 366} (2012), 1--11.

\bibitem{khu12al} E. I. Khukhro, Automorphisms of finite groups admitting a partition,
\emph{Algebra Logic} {\bf 51}, no.~3 (2012), ??--??.



\bibitem{kh2}
Khukhro E. I.,  Groups and Lie rings admitting an almost regular automorphism of prime order, \emph{Math. USSR--Sb.} {\bf  71}, no.~1, 51--63 (1992).


\bibitem{khmk1}
{Khukhro  E. I., Makarenko N. Yu.}, On Lie rings admitting an
automorphism of order~4 with few fixed points, {\it Algebra and
Logic}, {\bf 35} (1996), 21--43.

\bibitem{khmk3}
{Khukhro  E. I., Makarenko N. Yu.\,}, Lie rings admitting
automorphisms of order 4 with few fixed points. II, {\it Algebra
and Logic}, {\bf 37} (1998), 78--91.

\bibitem{khmk4}
{Khukhro E. I., Makarenko N. Yu.}, Lie rings with almost regular
automorphisms {\it J. Algebra}, {\bf 264} (2003), 641--664.

\bibitem{khmk5}
{Makarenko N. Yu., Khukhro E. I.}, Almost solubility of Lie
algebras with almost regular automorphisms {\it J. Algebra}, {\bf
277} (2004), 370--407.

\bibitem{kh4}
{Khukhro E. I.}, {\it Nilpotent groups and their automorphisms},
De Gruyter, Berlin, 1993.

\bibitem{hpbl}
{Huppert B., Blackburn N.}, {\it Finite groups}~II, Springer,
Berlin, 1982.

\bibitem{kh5}
{Khukhro E. I.}, {\it $p\hs$-Automorphisms of finite
$p\hs$-groups}, Cambridge University Press, 1999.

\bibitem{ml}
{Mal'cev A. I.}, On certain classes of infinite soluble
groups, {\it Amer. Math. Soc. Transl. (2)}, {\bf 2} (1956), 1--21.




\end{thebibliography}
\end{document}